\DeclareMathAlphabet{\mathpzc}{OT1}{pzc}{m}{it}
\newtheorem{theorem}{Theorem}[section]
\newtheorem*{theorem*}{Theorem}
\newtheorem{proposition}[theorem]{Proposition}
\newtheorem{lemma}[theorem]{Lemma}
\newtheorem*{lemma*}{Lemma}
\newtheorem{corollary}[theorem]{Corollary}
\newtheorem*{corollary*}{Corollary}
\newtheorem*{conjecture*}{Conjecture}
\theoremstyle{definition}
\newtheorem{definition}[theorem]{Definition}
\newtheorem{example}[theorem]{Example}
\theoremstyle{remark}
\newtheorem{remark}[theorem]{Remark}
\def\F{{\rm \mathbb{F}}}
\def\Z{{\rm \mathbb{Z}}}
\def\Q{{\rm \mathbb{Q}}}
\def\CC{{\rm \mathcal{C}}}
\def\A{{\rm \mathcal{A}}}
\def\O{{\rm \mathcal{O}}}
\def\a{{\rm \mathfrak{a}}}
\def\l{{\rm \mathfrak{l}}}
\def\ZZ{{\rm \Z_{+}}}
\def\QQ{{\rm \Q_{+}}}
\def\qq{{\rm \Q_{+}^2}}
\def\Hom{{\rm Hom}}
\def\Nrd{{\rm Nrd}}
\def\End{{\rm End}}
\def\char{{\rm char }}
\def\Mat{{\rm Mat}}
\def\GL{{\rm GL}}
\def\Ob{{\rm Ob}}
\def\Pic{{\rm Pic}}
\def\Nm{{\rm Nm}}
\def\dist{{\rm dist}}
\def\tot{{\rm tot}}
\def\ldeg{{\rm deg_\ell}}
\numberwithin{equation}{section}
\begin{document}

\title[Grothendieck groups of categories of abelian varieties]{Grothendieck groups of categories of abelian varieties}

\author{Ari Shnidman}
\address{Department of Mathematics, Boston College, Chestnut Hill, MA 02467}
\email{shnidman@bc.edu}
\subjclass[2010]{11G10, 14K99, 19A99}
\keywords{Abelian varieties, Grothendieck groups, elliptic curves}

\begin{abstract}
We compute the Grothendieck group of the category of abelian varieties over an algebraically closed field $k$.  We also compute the Grothendieck group of the category of $A$-isotypic abelian varieties, for any simple abelian variety $A$, assuming $k$ has characteristic 0, and for any elliptic curve $A$ in any characteristic.         
\end{abstract}

\maketitle

\section{Introduction}
The purpose of this note is to determine the Grothendieck groups of various categories of abelian varieties.    If $\CC$ is an exact category, then the Grothendieck group $K_0(\CC)$ is the quotient of the free abelian group generated by isomorphism classes in $\CC$ modulo the relations $[X] - [Y] + [Z]$, for any exact sequence $0 \to X \to Y \to Z \to 0$ in $\CC$.

Let $\A$ be the category of abelian varieties over an algebraically closed field $k$.  The morphisms in $\A$ are homomorphisms of abelian varieties.  Kernels do not necessarily exist in $\A$, but cokernels do exist, and $\A$ is an exact category.  

To compute the Grothendieck group $K_0(\A)$, it is helpful to consider the simpler category $\tilde \A$ of abelian varieties up to isogeny.  This category has the same objects as $\A$, and  
\[\Hom_{\tilde\A}(A,B) = \Hom(A,B) \otimes_\Z \Q,\] for $A,B \in \Ob(\tilde\A)$.  The category $\tilde\A$ is semisimple by Poincar\'e's reducibiliity theorem, so that 
\[K_0(\tilde\A) \cong \bigoplus_A \Z [A], \] where the direct sum is over representatives of simple isogeny classes of abelian varieties.  We have a surjective map $\iota \colon K_0(\A) \to K_0(\tilde \A)$ which sends an abelian variety to its isogeny class.  In fact: 
\begin{theorem}\label{entirecat}
The map $\iota$ is an isomorphism.  In particular, any additive function $\Ob(\A) \to G$ to an abelian group $G$ is an isogeny invariant.          
\end{theorem}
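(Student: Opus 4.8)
The plan is to prove that $\iota$ is injective; surjectivity is already recorded above. Since $K_0(\tilde\A)=\bigoplus_A\Z[A]$ is free on the simple isogeny classes and $\iota$ sends $[A]$ to its class in $K_0(\tilde\A)$, a natural strategy is to build a two-sided inverse: choose a representative $S_c$ of each simple isogeny class $c$ and define $s\colon K_0(\tilde\A)\to K_0(\A)$ by $s([c])=[S_c]$, which is well defined because $K_0(\tilde\A)$ is free. The composite $\iota\circ s$ is then the identity, so the whole problem reduces to showing $s\circ\iota=\mathrm{id}$, i.e. that for every abelian variety $A$ one has $[A]=\sum_c n_c[S_c]$ in $K_0(\A)$, where $n_c$ is the multiplicity of $c$ in the isogeny decomposition of $A$. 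Equivalently, I must show the class $[A]\in K_0(\A)$ is an \emph{isogeny invariant}; the final sentence of the theorem is just a restatement of this.

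First I would reduce to simple abelian varieties. By Poincar\'e reducibility every nonzero $A$ contains a minimal nonzero abelian subvariety, which is necessarily simple; inducting on $\dim A$ by passing to the quotient produces a filtration $0=A_0\subset A_1\subset\cdots\subset A_m=A$ by abelian subvarieties with each $A_i/A_{i-1}$ simple. The corresponding conflations give $[A]=\sum_i[A_i/A_{i-1}]$ in $K_0(\A)$, so $K_0(\A)$ is generated by classes of simple abelian varieties, and the successive quotients are, up to isogeny and order, the isogeny factors of $A$ (Jordan--H\"older in the semisimple category $\tilde\A$). Granting this, $\iota$ is injective as soon as one knows that isogenous simple abelian varieties have the same class: then each $[A_i/A_{i-1}]$ may be replaced by the appropriate $[S_c]$, yielding $[A]=\sum_c n_c[S_c]$ and hence $\ker\iota=0$.

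The heart of the matter is therefore the claim that if $S$ and $S'$ are isogenous simple abelian varieties, then $[S]=[S']$ in $K_0(\A)$. The naive idea fails at once: an isogeny $\phi\colon S\to S'$ does \emph{not} give a conflation $0\to\ker\phi\to S\to S'\to0$, because $\ker\phi$ is a finite group scheme and not an object of $\A$ (indeed kernels need not exist in $\A$). The conflations at my disposal are exactly the sequences $0\to P\to Y\to Y/P\to0$ attached to an abelian subvariety $P\subseteq Y$, so to relate $[S]$ and $[S']$ I must manufacture a single abelian variety $Y$ carrying this information. The target relation is clean: it suffices to produce $Y$ together with abelian subvarieties $P\cong S$ and $P'\cong S'$ admitting a \emph{common} quotient $Y/P\cong Y/P'$, for then the two conflations give $[S]+[Y/P]=[Y]=[S']+[Y/P']$ and hence $[S]=[S']$. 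Dually, it suffices to realize $S$ and $S'$ as quotients of one $Y$ by isomorphic abelian subvarieties.

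I expect this last construction to be the main obstacle, and I would spend the bulk of the effort there. The difficulty is that the obvious candidates are too crude. Inside a product $S\times S'$ an abelian subvariety isomorphic to $S$ has quotient isogenous to $S'$, and the complementary bookkeeping tends to force this quotient to be isomorphic to $S'$ itself, so that products essentially only reproduce the tautology $[S]+[S']=[S']+[S]$; likewise, gluing $S\times S'$ along a finite antidiagonal copy of $\ker\phi$ returns both quotients equal, giving no new relation. To break this symmetry one is forced to use genuinely \emph{non-split} extensions of abelian varieties, i.e. conflations $0\to P\to Y\to Q\to0$ with $Y\not\cong P\times Q$, which are precisely the conflations that $\tilde\A$ cannot see. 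Concretely, I would try to build $Y$ as such an extension, or as a quotient of a suitable product by a finite subgroup scheme manufactured from $\ker\phi$ and the dual isogeny $\hat\phi$, and then verify by a direct computation of the two quotients that they agree. Making the two quotients coincide on the nose, rather than merely up to isogeny (which is automatic), is the crux, and is exactly the input that separates $K_0(\A)$ from the a priori larger isotypic Grothendieck groups treated in the rest of the paper.
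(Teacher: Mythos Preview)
Your reductions are correct and your diagnosis of the difficulty is sharp, but the proposal stops exactly where the real work begins, and the direction you sketch for finishing cannot succeed.

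You are trying to witness $[S]=[S']$ using only conflations built from $S$ and $S'$ themselves (products, antidiagonal glueings, non-split self-extensions). This is precisely the question of whether $[S]=[S']$ already holds in the \emph{isotypic} Grothendieck group $K_0(S)$, and the rest of the paper shows that in general it does not: the group $G(A)$ is nonzero (Corollary~\ref{torsion}), and Example~\ref{surf} gives explicit simple $S,S'$ with $[S]\ne[S']$ in $K_0(S)$. So any argument that stays inside the isogeny class is doomed; the antidiagonal construction you dismiss is in fact the right shape, but applied to the wrong pair.

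The missing idea is to bring in an \emph{auxiliary} abelian variety outside the isogeny class. Factor the isogeny into prime steps, so $S'=S/G$ with $G$ of prime order $\ell$. If $B$ is \emph{any} abelian variety also containing a copy of $G$, then the antidiagonal quotient $C=(S\times B)/G$ sits in two conflations
\[
0\to S\to C\to B/G\to 0,\qquad 0\to B\to C\to S/G\to 0,
\]
yielding $[S]-[S/G]=[B]-[B/G]$ in $K_0(\A)$. Now choose $B$ so that the right-hand side vanishes: it suffices that $B$ admit an endomorphism with kernel isomorphic to $G$, for then $B/G\cong B$. A CM elliptic curve (or, in characteristic $p$ with $\ell=p$, a suitable elliptic curve over $\F_p$ with Frobenius) does the job for every $G$ of prime order. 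This is the ``trick involving non-isotypic abelian varieties'' alluded to in the introduction, and it is what your proposal is missing.
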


The proof of Theorem \ref{entirecat}, which was suggested to us by Julian Rosen, uses a trick involving non-isotypic abelian varieties to reduce to showing the following fact: for every finite group scheme $G$ over $k$ of prime order, there is an abelian variety $A$ over $k$ with an endomorphism whose kernel is isomorphic to $G$.  This fact is easily proved using the arithmetic of elliptic curves.  Thus, the proof of Theorem \ref{entirecat} exploits the fact that certain elliptic curves have extra endomorphisms, and does not shed much light on the structure of the category $\A$.     

For example, one consequence of Theorem \ref{entirecat} is that an abelian variety $A$ and its dual $\hat A$ determine the same class in $K_0(\A)$.  It is then natural to ask whether we can witness the relation $[A] = [\hat A]$ in $K_0(\A)$, using only short exact sequences that are intrinsic to $A$.  In other words, we ask for short exact sequences involving only abelian varieties that can be constructed from $A$ or $\hat A$ in some natural way, and that don't involve auxiliary abelian varieties such as CM elliptic curves.  One way to formalize this question is as follows.        

Let $A$ be a simple abelian variety of dimension $g$.  Write $\CC_A$ for the category of abelian varieties $B$ isogenous to $A^n$ for some $n \geq 0$, with morphisms as usual.  We will write $K_0(A)$ for $K_0(\CC_A)$, but notice that this group depends only on the isogeny class of $A$.  One can then ask whether it is true that $[A] = [\hat A]$ in $K_0(A)$.  It will follow from our main result below that the answer is typically no.  

Write $G(A)$ for the kernel of the map $\dim \colon K_0(A) \to \Z$ sending $[A]$ to $\dim A$.  The group $G(A)$ measures the difference between the category $\CC_A$ and its isogeny category $\tilde\CC_A$.  
We will describe $G(A)$ in terms of the endomorphism algebra $D = \End(A) \otimes \Q$, assuming $k$ has characteristic 0. So let us assume $\char \, k = 0$ until further notice.  Let $F$ be the center of $D$, and write $e = [F: \Q]$ and $d^2 = [D: F]$.  Also let $F^+$ be the set of non-zero elements of $F$ which are positive at all the ramified real places of $D$.
Then our main result is:     

\begin{theorem}\label{intro}
Let $\Q_+$ be the group of positive rational numbers under multiplication.  Then there is a canonical isomorphism 
\[\deg \colon G(A) \simeq \QQ\big/\Nm_{F/\Q}\left(F^+\right)^{2g/de},\]
which, for any two simple $A_1, A_2 \in \Ob(\CC_A)$, sends the class of $[A_1] - [A_2]$ to the class of the degree of any isogeny $A_1 \to A_2$.     
\end{theorem}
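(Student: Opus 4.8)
The plan is to realize the asserted isomorphism as the inverse of an explicit \emph{degree} homomorphism, in three stages: reduction to generators, construction and surjectivity of $\deg$, and finally injectivity, where the real work lies. First I would record the structure of $G(A)$ together with the key numerical input. Every $B\in\Ob(\CC_A)$ admits a filtration by abelian subvarieties with simple successive quotients, each necessarily isogenous to $A$; hence $[B]=\sum_i[S_i]$ in $K_0(A)$ with the $S_i$ simple, and since each simple $S_i$ satisfies $\iota[S_i]=1$, the group $G(A)=\ker\iota$ is generated by differences $[A_1]-[A_2]$ of simple objects. For the target, the essential computation is the degree formula: the space $V=H_1(A,\Q)$ is free of rank $m=2g/ed^2$ as a left $D$-module, so for $\alpha\in D^\times=(\End(A)\otimes\Q)^\times$ acting on $V$ one gets $\deg\alpha=\det_\Q(\alpha\mid V)=\Nm_{F/\Q}(\Nrd_{D/F}\alpha)^{2g/de}$, using $N_{D/F}=\Nrd_{D/F}^{\,d}$ for the regular representation (note $ed\mid 2g$, so the exponent is integral). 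By the Hasse--Schilling--Maass norm theorem $\Nrd_{D/F}(D^\times)=F^+$, and the same theorem for $\Mat_n(D)$ handles $\End(A^n)\otimes\Q$, so the degrees of all such self-isogenies fill out exactly $\Nm_{F/\Q}(F^+)^{2g/de}$; one checks via the Albert classification that this exponent forces every such value to be positive, so the subgroup indeed lies in $\QQ$.

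Next I would define $\deg\colon G(A)\to\QQ/\Nm_{F/\Q}(F^+)^{2g/de}$ on generators by $[A_1]-[A_2]\mapsto[\deg\phi]$ for any isogeny $\phi\colon A_1\to A_2$. Independence of the choice of $\phi$ is immediate: two such isogenies differ by an element of $\End(A_1)\otimes\Q\cong D^\times$, whose degree lies in the subgroup by the previous step. Surjectivity is equally easy: for each prime $p$ choose a subgroup scheme $H_p\subset A[p]$ of order $p$, so that $\deg([A/H_p]-[A])=[p]$ and every prime, hence all of the quotient, is hit.

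The crux is injectivity, which I would obtain by constructing a two-sided inverse $s\colon\QQ\to G(A)$, sending each prime $p$ to $[A/H_p]-[A]$ and extended by freeness. Everything then reduces to a single \emph{degree-invariance lemma}: the class $[A_1]-[A_2]\in G(A)$ depends only on $\deg\phi\in\QQ$ and equals $s(\deg\phi)$. Granting this, $s$ is surjective onto $G(A)$; the relation $\deg\circ s=\mathrm{pr}$ gives $\ker s\subseteq\Nm_{F/\Q}(F^+)^{2g/de}$; and applying the lemma to genuine self-isogenies of $A$ (clearing denominators, using $s(\deg[N])=[A]-[A]=0$) gives the reverse inclusion, so $G(A)=\mathrm{im}(s)\cong\QQ/\Nm_{F/\Q}(F^+)^{2g/de}$ with $\deg$ as inverse. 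To prove the lemma I would factor $\phi$ through a chain of prime-degree isogenies and telescope, reducing to showing that any two degree-$p$ isogenies $\phi\colon A_1\to A_2$ and $\phi'\colon A_1'\to A_2'$ between simple objects yield $[A_1]-[A_2]=[A_1']-[A_2']$. This I would deduce from the existence of an auxiliary $Y\in\Ob(\CC_A)$, isogenous to $A^2$, carrying abelian subvarieties $W\cong A_1$ and $W'\cong A_1'$ with $Y/W\cong A_2'$ and $Y/W'\cong A_2$, since then $[A_1]+[A_2']=[Y]=[A_1']+[A_2]$.

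The main obstacle is precisely the construction of $Y$ realizing both filtrations on one abelian variety, and this is where $\char k=0$ enters decisively: every finite group scheme of order $p$ is \'etale, so $\ker\phi\cong\ker\phi'\cong\Z/p$, which lets one assemble $Y$ as the identity component of a suitable fibre product (or pushout) of the two isogenies. Conceptually, I expect the cleanest route is to translate the whole problem into the category of $\End(A)$-lattices in $V$, with exact structure given by saturated sublattices, recasting $K_0(A)$ as a $K$-group of the order $\End(A)$ whose reduced-norm invariant is governed by Eichler's theorem; the one point requiring care in this translation is that not every lattice underlies an object of $\CC_A$, a gap one closes up to isogeny via Poincar\'e reducibility. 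Once the degree-invariance lemma is in place, the remaining identifications are formal.
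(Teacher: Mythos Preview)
Your strategy matches the paper's closely: the degree formula $\deg(\alpha)=\Nm_{F/\Q}(\Nrd\alpha)^{2g/de}$, the Hasse--Schilling--Maass identification of the image, and the reduction of the isomorphism to a ``degree-invariance'' statement are all the same ingredients the paper assembles in Sections~4--5. Your prime-degree invariance step via the diagonal quotient $Y=(A_1\times A_1')/\Delta$ is in fact the trick the paper uses in Theorem~\ref{entirecat2}; the paper's route to Theorem~\ref{main} instead goes through the slightly sharper Theorem~\ref{samedeg} (two isogenies from a \emph{common} source of equal degree give equal classes, via a third order-$\ell$ subgroup), but either variant suffices.

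There is, however, a genuine gap in your argument for the inclusion $\ker s\subseteq \Nm_{F/\Q}(F^+)^{2g/de}$. You invoke ``$\deg\circ s=\mathrm{pr}$'', but your $\deg$ is only specified on the symbols $[A_1]-[A_2]$; you check independence of the isogeny $\phi$ for fixed $A_1,A_2$, but you never verify that this assignment respects the relations defining $K_0(A)$, i.e.\ that it extends to a homomorphism on $G(A)$. Without that, $\deg\circ s$ is not meaningful and you have only produced a surjection $\QQ/\Nm_{F/\Q}(F^+)^{2g/de}\twoheadrightarrow G(A)$. The paper closes exactly this gap by defining $\deg_A$ on \emph{all} objects of $\CC_A$ (choosing a principally polarized base point and measuring the degree of any isogeny $B\to A^n$) and then proving additivity on arbitrary short exact sequences $0\to B_1\to B\to B_2\to 0$ via a polarization argument (Proposition~\ref{additive}): one splits $B\to B_1\times B_2$ using $\phi_M\hat\jmath\phi_L\times\pi$ and observes that the kernel of this map is the kernel of an endomorphism of $B_1$, hence has degree in $\deg(D)$. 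Your proposal needs this (or an equivalent additivity check) to go through; the lattice/Eichler picture you sketch at the end would also supply it, but as written that paragraph is a suggestion rather than an argument.
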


This shows that even though $\iota \colon K_0(\A) \to K_0(\tilde \A)$ is an isomorphism, there is still a big difference between the $A$-isotypic part of $\A$ and the $A$-isotypic part of $\tilde\A$.  Indeed:    

\begin{corollary}\label{torsion}
The group $G(A)$ is an infinite torsion group.
\end{corollary}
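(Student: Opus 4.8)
Write $m = 2g/de$, a positive integer, so that by Theorem~\ref{intro} we have $G(A) \cong \QQ/N$ with $N = \Nm_{F/\Q}(F^+)^{m} \subseteq \QQ$, where I regard $\QQ \cong \bigoplus_{p}\Z$ as the free abelian group on the primes, written multiplicatively, with coordinates the valuations $v_p$. Torsion is the easy half. Since $\Q \subseteq F$, every $q \in \QQ$ is totally positive, hence lies in $F^+$, and $\Nm_{F/\Q}(q) = q^{e}$; therefore $q^{em} = (q^{e})^m \in N$. Thus every class in $\QQ/N$ is killed by $em = 2g/d$, and $G(A)$ is a torsion group, even of bounded exponent.

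For infiniteness I plan to produce, for a suitable integer $c\ge 2$ and an infinite set $S$ of primes, a surjection $\QQ/N \twoheadrightarrow \bigoplus_{p\in S}\Z/c\Z$; since the target is infinite, so is $G(A)$. It is enough to check that $v_p(N)\subseteq c\Z$ for all $p\in S$, for then $q\mapsto (v_p(q)\bmod c)_{p\in S}$ descends along $\QQ \to \QQ/N$ to the desired surjection. When $m\ge 2$ this is immediate: every element of $N$ is an $m$-th power, so $N \subseteq P_m := \{q^m : q\in\QQ\}$, and $\QQ/P_m \cong \bigoplus_p \Z/m\Z$ is already infinite. So here one simply takes $c=m$ and $S$ equal to the set of all primes.

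The essential case is $m=1$. Here the classical structure theory (the rational homology is a free module over the division algebra $D$, forcing $ed^2 \mid 2g$) gives $d=1$ and $[F:\Q]=2g$, so that $A$ has complex multiplication by the field $F$; moreover $F$ must be a CM field, since a totally real $F$ would require $e \mid g$, impossible for $e = 2g$. For any $\alpha\in F^\times$ one has $v_p(\Nm_{F/\Q}\alpha) = \sum_{\mathfrak p \mid p} f_{\mathfrak p}\, v_{\mathfrak p}(\alpha) \in \big(\gcd_{\mathfrak p\mid p} f_{\mathfrak p}\big)\Z$, so it suffices to find infinitely many primes $p$ all of whose residue degrees in $F$ are even, as then $v_p(N)\subseteq 2\Z$ and I may take $c=2$. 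For this I would pass to the Galois closure $L$ of $F$, a CM field, and let $\tau \in \mathrm{Gal}(L/\Q)$ be complex conjugation. Because $F$ is totally imaginary, $\tau$ has no fixed point on $\mathrm{Hom}(F,\C)\cong \mathrm{Gal}(L/\Q)/\mathrm{Gal}(L/F)$, and since $\tau^2=1$ each of its orbits has size $2$; hence any unramified prime $p$ whose Frobenius is conjugate to $\tau$ has all residue degrees equal to $2$. By the Chebotarev density theorem there are infinitely many such $p$, which completes the argument.

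The main obstacle is precisely this case $m=1$. For $m\ge 2$ the $m$-th-power shape of $N$ makes infinite index automatic, but for $m=1$ the quotient is invisible to any single valuation, and one must instead exhibit an infinite family of primes whose residue degrees in $F$ share a common even factor. The crucial input is the identification of complex conjugation with a positive-density family of Frobenius elements, and it uses the totally imaginary nature of the CM field $F$ in an essential way.
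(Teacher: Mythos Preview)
Your argument is correct and follows the same overall architecture as the paper's proof: torsion is immediate because rational numbers lie in $F^+$ and their norm is an $e$th power; infiniteness is reduced to the case $m=1$, where one shows $F$ is a CM field and then that $\Nm_{F/\Q}(F^\times)$ has infinite index in $\QQ$.

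The difference is one of detail rather than strategy. The paper dispatches the $m\ge 2$ case and the identification of $F$ as CM in a single sentence (``by the classification of endomorphism algebras''), and simply \emph{asserts} that norms from a CM field have infinite index. You make both steps explicit: you derive $d=1$, $e=2g$ from the divisibility $ed^2\mid 2g$, rule out the totally real case via the Type~I constraint $e\mid g$, and then supply an actual proof of the infinite-index claim via Chebotarev, using that complex conjugation in the Galois closure $L$ acts freely on $\mathrm{Gal}(L/\Q)/\mathrm{Gal}(L/F)$ (since $F$ is totally imaginary), so that the corresponding Frobenius primes have all residue degrees equal to $2$. This Chebotarev step is the genuine content your write-up adds; it is correct and is exactly the kind of justification the paper leaves to the reader. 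Your observation that the exponent of $G(A)$ divides $2g/d$ is also a small bonus not stated in the paper.
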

\begin{proof}
The group $\QQ\big/\Nm_{F/\Q}\left(F^+\right)^{2g/de}$ is torsion since every integer has a power which is a norm from $F$.  If $\QQ\big/\Nm_{F/\Q}\left(F^+\right)^{2g/de}$ is finite, then we must have $2g = de$.  By the classification of endomorphism algebras of abelian varieties in characteristic 0, this implies that $F$ is a CM field.  But then $\Nm_{F/\Q}(F^\times)$ has infinite index in $\Q^\times$, which contradicts the assumption that $\QQ\big/\Nm_{F/\Q}\left(F^+\right)$ is finite.       
\end{proof}

Duality fits nicely into this picture as well:  

\begin{theorem}\label{duality}
The automorphism $[B] \mapsto [\hat B]$ of $K_0(A)$ is inversion on $G(A)$.     
\end{theorem}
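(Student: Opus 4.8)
The plan is to first verify that $[B]\mapsto[\hat B]$ is a well-defined additive involution of $K_0(A)$ that restricts to $G(A)$, and then to compute its effect on the generators used in Theorem~\ref{intro}. For well-definedness, the key point is that duality is an exact contravariant autoequivalence of $\A$: an admissible short exact sequence $0 \to X \to Y \to Z \to 0$ (with $X\hookrightarrow Y$ the inclusion of an abelian subvariety and $Y\to Z$ the quotient) is carried to $0 \to \hat Z \to \hat Y \to \hat X \to 0$, using that the dual of a closed immersion of abelian varieties is a surjection and conversely. Hence $[Y]\mapsto[\hat Y]$ respects the defining relations and descends to a homomorphism on $K_0$. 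Since any polarization gives an isogeny $A\to\hat A$, we have $\hat B\in\CC_A$ whenever $B\in\CC_A$, so the map preserves $\CC_A$; and $\dim\hat B=\dim B$, so it preserves $G(A)=\ker(\dim)$. Biduality $\hat{\hat B}\cong B$ shows it is an involution.

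Next I would reduce to generators. By the construction of the isomorphism $\deg$ in Theorem~\ref{intro}, the group $G(A)$ is generated by the classes $[A_1]-[A_2]$ with $A_1,A_2$ simple objects of $\CC_A$ (equivalently, isogenous to $A$), and $\deg$ sends $[A_1]-[A_2]$ to the class of $\deg\phi$ for any isogeny $\phi\colon A_1\to A_2$. Both duality and inversion are homomorphisms $G(A)\to G(A)$, and the target $\QQ/\Nm_{F/\Q}(F^+)^{2g/de}$ is abelian, so it suffices to check the identity $\deg([\hat A_1]-[\hat A_2])=\deg([A_1]-[A_2])^{-1}$ on these generators.

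The computation is then immediate from the behavior of degree under duality. Fix an isogeny $\phi\colon A_1\to A_2$ of degree $n$, so that $\deg([A_1]-[A_2])$ is the class of $n$. The dual isogeny $\hat\phi\colon\hat A_2\to\hat A_1$ again has degree $n$, and $\hat A_1,\hat A_2$ are simple objects of $\CC_A$, so applying the defining property of $\deg$ to the pair $(\hat A_2,\hat A_1)$ gives that $\deg([\hat A_2]-[\hat A_1])$ is the class of $n$, whence $\deg([\hat A_1]-[\hat A_2])$ is the class of $n^{-1}$. Thus $\deg$ intertwines the duality involution with inversion on $\QQ/\Nm_{F/\Q}(F^+)^{2g/de}$; since $\deg$ is an isomorphism and the target is written multiplicatively, this says precisely that $[B]\mapsto[\hat B]$ acts as $x\mapsto-x$ on $G(A)$.

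I expect the only real obstacle to lie in the first step: confirming that duality is genuinely exact for the exact structure on $\A$, i.e.\ that it carries admissible monomorphisms to admissible epimorphisms and conversely, so that it descends to $K_0(A)$. The remaining inputs---that $\deg\hat\phi=\deg\phi$ and that the differences $[A_1]-[A_2]$ generate $G(A)$---are standard, the latter being part of the proof of Theorem~\ref{intro}.
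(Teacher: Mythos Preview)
Your proposal is correct and follows essentially the same approach as the paper: both arguments use the isomorphism $\deg$ of Theorem~\ref{intro} to transport the duality involution to the target group, where it is seen to be inversion because passing from an isogeny to its dual preserves degree while reversing source and target. The paper packages the key computation as the identity $\deg_A(\hat B)=\deg_A(B)^{-1}$ (Lemma~\ref{dual}), proved via a polarization, whereas you obtain the same conclusion directly from $\deg\hat\phi=\deg\phi$; and the paper leaves the well-definedness of $[B]\mapsto[\hat B]$ on $K_0(A)$ implicit, while you spell it out---but these are cosmetic differences, not different strategies.
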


\begin{remark}
Note that duality does not induce inversion on all of $K_0(A)$, as $[A] \mapsto -[A]$ does not preserve dimension.
\end{remark}

This lets us exhibit many cases where $[A] \neq [\hat A]$ in $K_0(A)$.  

\begin{example}\label{surf}
If $A$ is an abelian surface with $\End(A) = \Z$, then Theorem \ref{intro} gives
\[G(A) \simeq \QQ/\Q_+^4 \simeq \bigoplus_\ell \Z/4\Z,\]
with the direct sum over all primes $\ell$.    
Suppose $A$ admits an isogeny $A_0 \to A$ of degree $\ell$ from a principally polarized surface $A_0$; in particular $A_0 \simeq \hat A_0$.  Then $[A] = [\hat A]$ in $K_0(A)$ if and only if $[A_0] - [A] = [A_0] - [\hat A]$ in $G(A)$.  By Theorem \ref{duality}, this would mean that the class of $[A_0] - [A]$ in $G(A)$ is its own inverse.  But the inverse of $\deg([A_0] - [A]) = \ell$ in $\QQ\big/\Q_+^4$ is $\ell^3$ and not $\ell$, so we must have $[A] \neq [\hat A]$ in $K_0(A)$.     
\end{example}

On the other hand, Theorem \ref{duality} immediately yields the following positive result, giving a canonical class in degree two in $K_0(A)$: 
\begin{theorem}\label{canonical}
If $A_1, A_2 \in \Ob(\CC_A)$ are simple, then $[A_1] + [\hat A_1]  = [A_2] + [\hat A_2]$ in $K_0(A)$.  In other words, the class $[A] + [\hat A]$ is independent of the choice of $A$ in its isogeny class.  
\end{theorem}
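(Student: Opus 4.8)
The plan is to deduce this directly from Theorem~\ref{duality}, so there is almost nothing to prove beyond some bookkeeping. First I would record the dimension count. Since $A_1$ and $A_2$ are \emph{simple} objects of $\CC_A$, each is isogenous to $A$ (a simple object isogenous to $A^n$ must be isogenous to a single factor), so $\dim A_1 = \dim A_2 = g$; moreover any polarization furnishes an isogeny $A_i \to \hat A_i$, so $\hat A_1, \hat A_2$ are again simple objects of $\CC_A$ with $\dim \hat A_i = g$. Consequently the element
\[
x \ce ([A_1] - [A_2]) + ([\hat A_1] - [\hat A_2]) = ([A_1] + [\hat A_1]) - ([A_2] + [\hat A_2])
\]
has total dimension $g + g - g - g = 0$, so it lies in the subgroup $G(A) = \ker(\dim) \subset K_0(A)$. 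Proving the theorem amounts to showing $x = 0$.

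Next I would apply the duality automorphism $\delta \colon [B] \mapsto [\hat B]$ of $K_0(A)$. By definition $\delta([A_1] - [A_2]) = [\hat A_1] - [\hat A_2]$. On the other hand, the difference $[A_1] - [A_2]$ itself lies in $G(A)$, again because $A_1$ and $A_2$ are simple of the same dimension $g$, and Theorem~\ref{duality} asserts that $\delta$ restricts to inversion $y \mapsto -y$ on $G(A)$. Combining these two facts gives
\[
[\hat A_1] - [\hat A_2] = \delta([A_1] - [A_2]) = -([A_1] - [A_2]).
\]

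Substituting this identity into the defining expression for $x$ yields
\[
x = ([A_1] - [A_2]) + \big(-([A_1] - [A_2])\big) = 0,
\]
which is precisely the desired relation $[A_1] + [\hat A_1] = [A_2] + [\hat A_2]$ in $K_0(A)$.

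I do not expect any genuine obstacle: the entire content has been front-loaded into Theorem~\ref{duality}, and the argument above is just the observation that a sum of an element of $G(A)$ and its image under inversion vanishes. The only steps requiring a moment's care are the preliminary verifications that (i) a simple object of $\CC_A$ is isogenous to $A$ and hence so is its dual, so that $\delta$ preserves $\CC_A$ and the relevant classes are all defined, and (ii) both $[A_1] - [A_2]$ and the full expression $x$ actually land in $G(A)$, which is what licenses the application of Theorem~\ref{duality}. Once these are in place the conclusion is immediate.
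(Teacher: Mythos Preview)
Your proposal is correct and follows exactly the approach the paper intends: the paper simply states that Theorem~\ref{duality} ``immediately yields'' Theorem~\ref{canonical}, and your argument spells out that immediate deduction---apply the duality automorphism to $[A_1]-[A_2]\in G(A)$ and use that it acts as inversion there. The extra bookkeeping you include (simple objects of $\CC_A$ are isogenous to $A$, duals stay in $\CC_A$, the differences lie in $G(A)$) is appropriate and not in tension with the paper's terse treatment.
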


Theorem \ref{intro} and Corollary \ref{torsion} can fail quite dramatically in characteristic $p$.  The next two results show that $G(A)$ need not be infinite nor torsion in positive characteristic.

\begin{theorem}\label{supersing}
Suppose $\char \, k  = p > 0$ and let $A$ be a supersingular elliptic curve.  Then $G(A) = 0$.  In particular, $K_0(A) \simeq \Z$.     
\end{theorem}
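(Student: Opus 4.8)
The plan is to prove that $K_0(A)$ is generated by the single class $[A]$; since the homomorphism $\dim$ sends $[A]$ to $1$, this immediately forces $\dim \colon K_0(A) \to \Z$ to be an isomorphism, and hence $G(A) = \ker(\dim) = 0$. Throughout I use that, $A$ being supersingular, $\End(A)$ is a maximal order $\O$ in the quaternion algebra over $\Q$ ramified exactly at $p$ and $\infty$, and that the objects of $\CC_A$ are precisely the supersingular abelian varieties isogenous to a power of $A$. I will proceed in two steps: first showing that every supersingular elliptic curve determines the class $[A]$, and then reducing an arbitrary object of $\CC_A$ to elliptic curves by dévissage.

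The essential input is the classical theorem of Deligne (see also Ogus and Shioda) that over an algebraically closed field of characteristic $p$ the isomorphism class of a product of two supersingular elliptic curves is independent of the factors; in particular $E_1 \times E_2 \cong A \times A$ for any supersingular elliptic curves $E_1, E_2$. Granting this, the split exact sequence $0 \to E_1 \to E_1 \times E_2 \to E_2 \to 0$ in $\A$ yields $[E_1] + [E_2] = [E_1 \times E_2] = [A \times A] = 2[A]$ in $K_0(A)$. Specializing $E_2 = A$ gives $[E_1] = [A]$, so every supersingular elliptic curve has class $[A]$.

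For the reduction I induct on $n = \dim B$ that $[B] = n[A]$ for all $B \in \CC_A$, the case $n = 0$ being trivial. If $n \geq 1$, then $\Hom(A,B) \neq 0$ because $B$ is isogenous to $A^n$, so a non-zero homomorphism $A \to B$ has image an elliptic curve $E' \subset B$ isogenous to $A$. The exact sequence $0 \to E' \to B \to B/E' \to 0$ gives $[B] = [E'] + [B/E']$; here $B/E'$ again lies in $\CC_A$ and has dimension $n-1$, so $[B/E'] = (n-1)[A]$ by induction, while $[E'] = [A]$ by the previous step. Thus $[B] = n[A]$, proving $K_0(A) = \Z[A] \cong \Z$ and $G(A) = 0$.

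The one non-formal ingredient is the product isomorphism $E_1 \times E_2 \cong A \times A$, and this is where the real content sits; I expect it to be the main obstacle, everything else being bookkeeping with exact sequences. Conceptually it expresses the triviality of the locally free class group of the maximal order $\O$ together with cancellation in rank $\geq 2$ --- equivalently, the relevant narrow ray class group of $\Q$ is trivial --- and it is special to the supersingular case, where $\End(A)$ is a maximal order in a division algebra. This is exactly the mechanism absent in characteristic $0$, consistent with the sharp contrast with Theorem \ref{intro}. Alternatively one could route the whole computation through the exact equivalence $B \mapsto \Hom(A,B)$ between $\CC_A$ and the category of finitely generated torsion-free $\O$-modules, reducing to $K_0(\O) \cong \Z \oplus \mathrm{Cl}(\O)$ with $\mathrm{Cl}(\O) = 0$; but the dévissage above sidesteps constructing that equivalence.
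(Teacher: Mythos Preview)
Your proof is correct, but it takes a different route from the paper's.  The paper argues that for any supersingular $E'$ one can find, by a result of Kohel on the supersingular isogeny graph, a prime $\ell \neq p$ together with $\ell$-isogenies $A \to A$ and $A \to E'$; then Theorem~\ref{samedeg} (two isogenies of the same prime-to-$p$ degree from a common source give equal classes) immediately forces $[E'] = [A]$.  Your argument instead invokes the Deligne--Shioda product theorem $E_1 \times E_2 \cong A \times A$, which gives $[E_1] + [E_2] = 2[A]$ directly and hence $[E_1] = [A]$.  Both approaches import a nontrivial external fact about supersingular elliptic curves and are of comparable depth; the paper's has the advantage of staying inside the machinery it has already built (so the reader sees Theorem~\ref{samedeg} doing real work), while yours is more structural and makes transparent \emph{why} the result holds --- namely the triviality of the locally free class group of the maximal order --- and this conceptual point is worth making.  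Your explicit d\'evissage reducing arbitrary $B \in \CC_A$ to elliptic curves is fine; the paper leaves this implicit, having already noted in the proof of Theorem~\ref{main} that $K_0(A)$ is generated by simple objects.
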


\begin{theorem}\label{frob}
Suppose $\char \, k = p > 0$ and let $A$ be an elliptic curve with $\End(A) = \Z$.  Then $G(A)$ contains an element of infinite order.  
\end{theorem}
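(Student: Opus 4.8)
The plan is to exploit the relative Frobenius isogeny, which in characteristic $p$ has no separable counterpart, and to build from it an additive $\Z$-valued invariant detecting the asymmetry between the \'etale and multiplicative parts of the $p$-divisible group. First I would record that $A$ is ordinary: the hypothesis $\End(A)=\Z$ rules out the supersingular case (a supersingular elliptic curve has $j$-invariant in $\bar\F_p$ and hence extra endomorphisms), and it forces $A$ to be non-isotrivial, so that the relative Frobenius $F\colon A\to A^{(p)}$ is a degree-$p$ isogeny which is not an isomorphism. Since $A$ is ordinary, $A[p^\infty]$ sits in a canonical connected--\'etale sequence whose connected part is multiplicative ($\cong \mu_{p^\infty}$) and whose \'etale quotient is $\cong \Q_p/\Z_p$; in particular $\ker F=\mu_p$ is purely multiplicative. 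Thus $A^{(p)}\in\Ob(\CC_A)$, and $[A]-[A^{(p)}]\in G(A)$ is the candidate element of infinite order.

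Next I would construct the invariant. For $B\in\Ob(\CC_A)$ consider the physical $p$-adic Tate modules $T_pB$ and $T_p\hat B$: since $A$ is ordinary these are free $\Z_p$-modules of rank $\dim B$, with $T_pB$ recording the \'etale part of $B[p^\infty]$ and $T_p\hat B$ recording, dually, its multiplicative part. An isogeny $\psi\colon B\to B'$ induces injections with finite cokernels, of $\Z_p$-lengths $\delta_{\mathrm{et}}(\psi)=v_p|(\ker\psi)^{\mathrm{et}}|$ and $\delta_{\mathrm{mult}}(\psi)=v_p|(\ker\psi)^{\mathrm{mult}}|$. On differences of simple (elliptic) objects isogenous to $A$, I would set
\[
\lambda([B']-[B]) \ce \delta_{\mathrm{et}}(\psi)-\delta_{\mathrm{mult}}(\psi)\in\Z
\]
for any isogeny $\psi\colon B\to B'$. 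The key well-definedness point is that, using $\End(A)=\Z$, the group $\Hom(B,B')$ has rank one, so any two such isogenies differ by an integer multiplication $[n]$; since $\ker[n]$ contributes equal $p$-lengths to its \'etale and multiplicative parts, the difference $\delta_{\mathrm{et}}-\delta_{\mathrm{mult}}$ is unchanged. Composing isogenies shows $\lambda$ is additive along chains, so it extends to a homomorphism on the subgroup of $G(A)$ generated by differences of simple objects --- which is all of $G(A)$, since every object of $\CC_A$ admits a filtration with elliptic subquotients isogenous to $A$.

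The main obstacle --- the heart of the proof --- is to check that $\lambda$ respects every short exact sequence $0\to X\to Y\to Z\to 0$ in $\CC_A$, i.e. that $\lambda$ is consistent with the relations coming from higher-dimensional objects carrying several elliptic subvarieties. For this I would argue that the functors $T_p(-)$ and $T_p\widehat{(-)}$ are exact on short exact sequences of ordinary abelian varieties over $k$, so that both $\delta_{\mathrm{et}}$ and $\delta_{\mathrm{mult}}$ are additive in extensions; the reference ambiguity involved in separating them is the diagonal $\End$-scaling, which cancels in the difference. I expect the technical care here to lie in pinning down the normalizations across objects of different dimension and in verifying the exactness statements for the multiplicative part via Cartier duality. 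Granting this, $\lambda$ descends to a homomorphism $G(A)\to\Z$. Finally, computing with $\psi=F$ gives $\delta_{\mathrm{et}}(F)=0$ and $\delta_{\mathrm{mult}}(F)=v_p|\mu_p|=1$, so $\lambda([A]-[A^{(p)}])=1$. As $\Z$ is torsion-free, $[A]-[A^{(p)}]$ has infinite order in $G(A)$, which proves the theorem.
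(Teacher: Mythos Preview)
Your proposal is correct and is essentially the paper's own argument: the paper constructs the same additive invariant $\deg_p$ (counting Jordan--H\"older factors $\Z/p\Z$ minus factors $\mu_p$ in kernels of isogenies), verifies well-definedness using that endomorphisms of $E^n$ have $\deg_p=0$ (via Smith normal form, equivalent to your rank-one $\Hom$ observation), checks additivity as in Proposition~\ref{additive}, and then exhibits $[A]-[A^{(p)}]$ as an element with $\deg_p$ equal to $\pm 1$. The only organizational difference is that the paper defines the invariant directly on all objects $B\in\CC_A$ via an isogeny $B\to E^n$, rather than first on simple objects and then extending, which streamlines the ``normalization across dimensions'' issue you flag.
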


For general $A$ in characteristic $p$, determining the structure of $G(A)$ is somewhat subtle, and we hope to return to this question in future work.  It would also be interesting to compute the groups $G(A)$ when $k$ is a finite field.  The answer should be related to Milne's computation \cite{milne} of the size of the Ext group of two abelian varieties over a finite field.  When $A$ is an elliptic curve over a finite field, one can presumably deduce the answer from the results of the recent paper \cite{jordan}.            

The plan for the rest of the note is as follows.  In Section 2, we prove Theorem \ref{entirecat}.  In Section 3, we prove a useful criterion for two isogenous abelian varieties $B, B' \in \Ob(\CC_A)$ to determine the same class in $K_0(A)$.  In Section 4, we define the map $\deg$ in Theorem \ref{intro}.  In Section 5, we prove that $\deg$ is an isomorphism in characteristic 0.  In Section 6, we determine $G(E)$ for any elliptic curve $E$, in any characteristic. 

\section{The Grothendieck group $K_0(\A)$}
We let $k$ be an algebraically closed field and $\A$ the category of abelian varieties over $k$.   
\begin{theorem}\label{entirecat2}
There is an isomorphism $K_0(\A) \cong \bigoplus_A \Z[A]$, where the sum is over representatives $A$ of simple isogeny classes of abelian varieties.   
\end{theorem}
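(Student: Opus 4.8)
The plan is to prove that the surjection $\iota \colon K_0(\A) \to K_0(\tilde\A) \cong \bigoplus_A \Z[A]$ from the introduction is injective; together with the semisimplicity of $\tilde\A$ this yields the stated isomorphism (so this is just a restatement of Theorem \ref{entirecat}). The first reduction is structural. Every abelian variety $B$ admits a filtration $0 = B_0 \subset B_1 \subset \cdots \subset B_m = B$ by abelian subvarieties with simple successive quotients, and each inclusion $B_{i-1} \subset B_i$ is an admissible monomorphism with $B_i/B_{i-1}$ an abelian variety, so $[B] = \sum_i [B_i/B_{i-1}]$ in $K_0(\A)$. Hence $K_0(\A)$ is generated by the classes of simple abelian varieties. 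Grouping these generators by isogeny class, injectivity of $\iota$ reduces to the single assertion that any two isogenous abelian varieties determine the same class in $K_0(\A)$: if $x = \sum_i m_i[S_i]$ with the $S_i$ simple and $\iota(x) = 0$, then the coefficient sum within each isogeny class vanishes, and once isogenous varieties are known to be equal in $K_0(\A)$ this forces $x = 0$.

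It therefore suffices to show $[A] = [B]$ whenever there is an isogeny $\phi \colon A \to B$. Factoring $\phi$ through the quotients $A/G_i$ attached to a composition series $0 = G_0 \subset \cdots \subset G_m = \ker\phi$ of the kernel group scheme, I may assume $\ker\phi = G$ has prime order. The key input, which I would isolate as a lemma, is: for every finite group scheme $G$ over $k$ of prime order there exist an abelian variety $C$ and an isogeny $\psi \in \End(C)$ with $\ker\psi \cong G$. Granting this, realize $G$ simultaneously as $\ker\phi \subset A$ via $i$ and as $\ker\psi \subset C$ via $j$, form the diagonal subgroup scheme $H = \{(i(g),j(g))\} \subset A \times C$, and set $W = (A\times C)/H$. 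Since $i$ and $j$ are closed immersions, the composites $A = A\times 0 \to W$ and $C = 0\times C \to W$ are closed immersions onto abelian subvarieties $A', C' \subset W$, and a direct computation of quotients gives $W/A' \cong C/\ker\psi \cong C$ and $W/C' \cong A/\ker\phi \cong B$. This produces two admissible short exact sequences
\[ 0 \to A \to W \to C \to 0, \qquad 0 \to C \to W \to B \to 0, \]
whence $[A] + [C] = [W] = [C] + [B]$ and so $[A] = [B]$; note it is essential that $\psi$ be an endomorphism, so that the auxiliary class $[C]$ cancels.

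It remains to construct the pairs $(C,\psi)$, and this is where the arithmetic of elliptic curves enters. Over an algebraically closed field the group schemes of prime order are $\Z/\ell\Z$ for primes $\ell$ (the only possibility when $\char k = 0$, and the only one of order $\ell \neq \char k$ in general), together with $\mu_p$ and $\alpha_p$ when $\ell = p = \char k > 0$. For $\Z/\ell\Z$ I would take an elliptic curve $E$ with $\sqrt{-\ell} \in \End(E)$ — e.g.\ a curve with complex multiplication by $\Z[\sqrt{-\ell}]$, which exists over $k$ in every characteristic — and set $\psi = [\sqrt{-\ell}]$; then $\deg\psi = \ell$, and since $\ell \neq \char k$ the kernel is étale of order $\ell$, hence isomorphic to $\Z/\ell\Z$. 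For $\mu_p$ I would take an ordinary elliptic curve defined over the prime field $\F_p$ and let $\psi$ be its $p$-power Frobenius endomorphism, whose kernel is the connected multiplicative part $\mu_p$ of $E[p]$. For $\alpha_p$ I would take a supersingular curve $E$ and any endomorphism of degree $p$, which exists because $p$ is represented by the reduced norm form of the maximal order $\End(E)$ in the quaternion algebra $B_{p,\infty}$; since $E[p]$ is local-local, any order-$p$ subgroup scheme is isomorphic to $\alpha_p$, forcing $\ker\psi \cong \alpha_p$.

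The main obstacle is this lemma, and within it the positive-characteristic infinitesimal cases $\mu_p$ and $\alpha_p$. The étale case is immediate from complex multiplication, but realizing $\mu_p$ and $\alpha_p$ as kernels of honest endomorphisms — rather than of the inseparable isogeny $E \to E^{(p)}$, which need not be an endomorphism — is what requires choosing the curve with care: an ordinary curve rational over $\F_p$ for $\mu_p$, and a supersingular curve whose quaternionic endomorphism ring supplies a degree-$p$ element for $\alpha_p$. Everything else is a formal manipulation of short exact sequences, so the whole theorem rests on producing these few explicit endomorphisms.
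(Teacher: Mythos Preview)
Your strategy is exactly the paper's: reduce to isogenies of prime degree, use the diagonal quotient $(A\times C)/G$ to trade the relation $[A]-[A/G]$ for $[C]-[C/G]$, and then find for each prime-order $G$ an elliptic curve $C$ with an endomorphism whose kernel is $G$. The structural part is fine.

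There is, however, a genuine gap in the lemma when $\char k = p > 0$: you never produce an endomorphism with kernel $\Z/p\Z$. Your list of order-$p$ group schemes correctly includes $\Z/p\Z$, $\mu_p$, and $\alpha_p$, but your treatment of the case ``$\Z/\ell\Z$'' explicitly invokes $\ell \neq \char k$ to conclude the kernel is \'etale. The CM construction does not rescue this when $\ell = p$: a curve with $\sqrt{-p}\in\End(E)$ is supersingular (since $p$ ramifies in $\Q(\sqrt{-p})$), so the kernel of $[\sqrt{-p}]$ is $\alpha_p$, not $\Z/p\Z$. The paper fills this gap by taking an \emph{ordinary} curve with $j\in\F_p$, so that Frobenius $F\colon E \to E^{(p)}\cong E$ is an endomorphism; then $\ker F \cong \mu_p$ and $\ker \hat F \cong \Z/p\Z$, handling both cases at once.

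A smaller point: your justification for $\alpha_p$ is not quite right as stated. A supersingular $E$ has a \emph{unique} order-$p$ subgroup scheme, namely $\ker F$, so a degree-$p$ endomorphism exists iff $E \cong E^{(p)}$, i.e.\ iff $j(E)\in\F_p$. In particular, not every maximal order in $B_{p,\infty}$ represents $p$ by its reduced norm, contrary to what you assert. The fix is simply to choose a supersingular curve with $j\in\F_p$ (such curves exist by Deuring), after which Frobenius itself is the desired endomorphism---which is again what the paper does.
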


\begin{proof}
By Poincar\'e reducibility, it is enough to show that if $\phi\colon  A \to A'$ is an isogeny of abelian varieties, then $[A] = [A']$ in $K_0(\A)$.  We immediately reduce to the case where $\phi$ is an $\ell$-isogeny for some prime $\ell$.  

Suppose $A$ and $B$ are abelian varieties, each containing an embedding of a group scheme $G$ of order $\ell$.  Let $C$ be the quotient of $A \times B$ by a diagonal copy of $G \subset A \oplus A'$.  Then there are exact sequences 
\[0 \to A \to C \to B/G \to 0,\]     
\[0 \to B \to C \to A/G \to 0.\]
We therefore have the relation
\[[A] - [A_G] = [B] - [B_G]\]
in $K_0(\A)$, where $A_G$ (resp.\ $B_G$) is {\it any} quotient of $A$ (resp.\ $B$) by a group isomorphic to $G$.  So to prove the theorem, it suffices to find, for every prime $\ell$ and for every group scheme $G$ of order $\ell$, a single abelian variety $A$ and an endomorphism $f \in \End(A)$ such that $\ker f \simeq G$.  In fact, we will show that we can take $A$ to be an elliptic curve $E$.     

Write $p = \char \, k$.  If $\ell \neq p$, then $G \simeq \Z/\ell\Z$, and we may take $E$ such that $\End(E)$ contains $\Z[\sqrt{-\ell}]$.  Such an elliptic curve exists over any algebraically closed field $k$, by the theory of complex multiplication.  If $\ell = p$, then there are three group schemes $G$ to consider, but for all three we will take $E$ with $j$-invariant lying in $\F_p$.  In this case, the Frobenius morphism 
\[F \colon E \to E^{(p)} \simeq E\] 
is an endomorphism.  If $E$ is supersingular, then $\ker F \simeq \alpha_p$, while if $E$ is ordinary, then $\ker F \simeq \mu_p$ and $\ker \hat F \simeq \Z/p\Z$.  The number of supersingular elliptic curves over $\F_p$ is related to a certain class number by a result of Deuring \cite{deuring}, and is always non-zero  \cite[Thm.\ 14.18]{cox}.  On the other hand, the number of supersingular $j$-invariants over $\bar \F_p$ is less than $p$, so there are always $j$-invariants of both types in $\F_p$.  This concludes the proof.   
\end{proof}

\begin{corollary}\label{invariant}
Any additive function $\Ob(\A) \to G$ to an abelian group $G$ is an isogeny invariant.   
\end{corollary}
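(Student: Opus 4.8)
The plan is to deduce this directly from the universal property of the Grothendieck group, together with Theorem \ref{entirecat2}. Recall that an additive function $\lambda \colon \Ob(\A) \to G$ is, by definition, one that takes the same value on isomorphic objects and satisfies $\lambda(Y) = \lambda(X) + \lambda(Z)$ for every short exact sequence $0 \to X \to Y \to Z \to 0$ in $\A$. Equivalently, the assignment $[X] \mapsto \lambda(X)$ on generators of the free abelian group on isomorphism classes kills each relator $[X] - [Y] + [Z]$, and therefore descends to a well-defined group homomorphism $\bar\lambda \colon K_0(\A) \to G$ with $\lambda(X) = \bar\lambda([X])$ for every $X \in \Ob(\A)$. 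Verifying this factorization is purely formal, being nothing more than an unwinding of the definition of $K_0(\A)$ as a quotient of that free abelian group.

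With $\bar\lambda$ in hand, isogeny invariance is immediate. Given an isogeny $\phi \colon A \to A'$, the proof of Theorem \ref{entirecat2} shows that $[A] = [A']$ in $K_0(\A)$. Applying $\bar\lambda$ then yields
\[
\lambda(A) = \bar\lambda([A]) = \bar\lambda([A']) = \lambda(A'),
\]
as required. There is no real obstacle to overcome beyond Theorem \ref{entirecat2} itself: all the arithmetic content lives in the already-established identity $[A] = [A']$ for isogenous abelian varieties, and the present statement merely transports that identity across the universal property of the Grothendieck group. Accordingly, I would keep the argument to these two observations and not belabor the routine check that $\bar\lambda$ is well defined.
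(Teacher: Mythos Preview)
Your argument is correct and matches the paper's approach: the corollary is stated without proof immediately after Theorem~\ref{entirecat2}, being an immediate consequence of the universal property of $K_0(\A)$ together with the identity $[A]=[A']$ for isogenous $A,A'$ established there. There is nothing to add.
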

Corollary \ref{invariant} can be used to show that certain functions are {\it not} additive:
\begin{example}
If $k = \bar\Q$, then the stable Faltings height is additive under taking products of abelian varieties.  If it were additive under short exact sequences, then it would be an isogeny invariant. But it is easy to see from Faltings' isogeny formula \cite[Lem.\ 5]{faltings} that the height sometimes changes under isogeny.
\end{example}

\section{A useful criterion}

In this section, we let $k$ be any algebraically closed field.  Let $A$ be an abelian variety over $k$ of dimension $g$, not necessarily simple.  

\begin{lemma}\label{key}
Suppose $\pi_1: A \to A_1$ and $\pi_2 : A \to A_2$ are isogenies such that $\ker \pi_1 \cap \ker \pi_2 = 0$.  Then
\[[A] = [A_1] + [A_2] - [A_3] \in K_0(A),\]
where $A_3$ is the quotient $A/(\ker \pi_1 + \ker \pi_2)$.  
\end{lemma}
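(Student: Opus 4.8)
The plan is to reduce the claim to exhibiting short exact sequences in $\CC_A$ and reading off the resulting relation in $K_0(A)$. Write $G_1 = \ker \pi_1$ and $G_2 = \ker \pi_2$, so that the hypothesis reads $G_1 \cap G_2 = 0$. Rearranging the target identity gives $[A_1] + [A_2] = [A] + [A_3]$. Since the split sequence $0 \to A_1 \to A_1 \times A_2 \to A_2 \to 0$ already yields $[A_1 \times A_2] = [A_1] + [A_2]$, it suffices to produce a single short exact sequence
\[0 \to A \to A_1 \times A_2 \to A_3 \to 0\]
in $\CC_A$; this gives $[A_1 \times A_2] = [A] + [A_3]$ and hence the lemma. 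Note that $A_1$, $A_2$, $A_3$, and $A_1 \times A_2$ are all isogenous to powers of $A$, so they indeed lie in $\CC_A$ and the relation makes sense in $K_0(A)$.

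Next I would write down the maps. The left-hand map is $\phi = (\pi_1, \pi_2) \colon A \to A_1 \times A_2$, whose kernel is $G_1 \cap G_2 = 0$, so $\phi$ is a closed immersion. For the right-hand map, observe that $A_1 = A/G_1$ and $A_2 = A/G_2$ both carry quotient maps $q_1 \colon A_1 \to A_3$ and $q_2 \colon A_2 \to A_3$ onto $A_3 = A/(G_1 + G_2)$. Here the hypothesis enters essentially: by the second isomorphism theorem for group schemes, $\ker q_1 = (G_1 + G_2)/G_1 \cong G_2/(G_1 \cap G_2) = G_2$, and similarly $\ker q_2 \cong G_1$. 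Writing $p_1, p_2$ for the projections, I set $\delta = q_1 p_1 - q_2 p_2 \colon A_1 \times A_2 \to A_3$. Since $q_1$ is surjective, so is $\delta$; and writing $\pi_3 \colon A \to A_3$ for the canonical quotient, one has $q_1 \pi_1 = \pi_3 = q_2 \pi_2$, so $\delta \circ \phi = 0$ and $\phi$ factors through $\ker \delta = A_1 \times_{A_3} A_2$.

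The crux is to prove exactness in the middle, namely that $\phi$ maps $A$ isomorphically onto $\ker \delta$. A dimension count shows both sides are $g$-dimensional and $\phi$ has trivial kernel, but I expect the genuine obstacle to be surjectivity carried out scheme-theoretically: in characteristic $p$ the group schemes $G_1, G_2$ may be non-reduced (e.g. $\mu_p$ or $\alpha_p$), so an argument on $k$-points alone is insufficient. I would therefore work with fppf sheaves. Given a $k$-scheme $T$ and a point $(x, y) \in (A_1 \times_{A_3} A_2)(T)$, the maps $\pi_1, \pi_2$ are fppf-surjective, so after passing to an fppf cover of $T$ we may lift $x, y$ to $\tilde x, \tilde y \in A(T)$. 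Then $\pi_3(\tilde x) = q_1(x) = q_2(y) = \pi_3(\tilde y)$ forces $\tilde x - \tilde y \in (G_1 + G_2)(T)$. Crucially, $G_1 \cap G_2 = 0$ makes the sum map $G_1 \times G_2 \to G_1 + G_2$ an isomorphism, so I may write $\tilde x - \tilde y = h_1 + h_2$ with $h_i \in G_i(T)$ and check that $a = \tilde x - h_1$ satisfies $\phi(a) = (x, y)$. Hence $\phi$ is fppf-surjective, and being a monomorphism it is an isomorphism onto $\ker \delta$. This establishes the short exact sequence above and completes the proof.

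Alternatively, one can produce the two needed sequences directly in the style of Theorem~\ref{entirecat2}: form $C = (A \times A_1)/\Delta(G_2)$ for the diagonal embedding of $G_2$ using $G_2 = \ker \pi_2 \subset A$ and $G_2 \cong \ker q_1 \subset A_1$. The two resulting quotients are $A/G_2 = A_2$ and $A_1/\ker q_1 = A_3$, giving $[C] = [A] + [A_3]$ and $[C] = [A_1] + [A_2]$, hence the relation at once. The same scheme-theoretic care with the cokernels, and the same use of $G_1 \cap G_2 = 0$, is required in this variant.
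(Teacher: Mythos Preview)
Your proof is correct and follows essentially the same strategy as the paper: both build the short exact sequence $0 \to A \xrightarrow{(\pi_1,\pi_2)} A_1 \times A_2 \xrightarrow{q_1 - q_2} A_3 \to 0$ and check exactness in the middle by lifting a point of the fiber product back to $A$ and adjusting by an element of $\ker\pi_1$. The paper argues only on $k$-points, explicitly leaving the categorical version as an exercise (with a footnote that the points argument is inadequate in characteristic $p$ when $\ker\pi_1 + \ker\pi_2$ is non-reduced), whereas you supply the fppf-sheaf argument in full; your alternative via the diagonal quotient is also valid and mirrors the trick used in the proof of Theorem~\ref{entirecat2}.
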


\begin{proof}
For $i = 1,2$, let $\tilde \pi_i : A_i \to A_3$ be the natural projection maps, so that 
\[\ker \tilde \pi_1 = \pi_1(\ker \pi_2) \hspace{5mm} \mbox{and} \hspace{5mm} \ker \tilde \pi_2 = \pi_2(\ker \pi_1).\]    
Then there is a short exact sequence
\[0 \to A \stackrel{\pi_1 \times \pi_2}{\longrightarrow} A_1 \times A_2 \stackrel{\tilde\pi_1 - \tilde \pi_2}{\longrightarrow} A_3 \to 0.\]
To see this, we need to check that the kernel of $\phi:= \tilde\pi_1 - \tilde \pi_2$ is contained in the image of $\pi_1 \times \pi_2$.  For concreteness, we argue on the level of points, leaving to the reader the exercise of making this argument categorical.\footnote{It is not enough to argue on the level of points if $\char\, k > 0$ and $\ker \pi_1 + \ker \pi_2$ has order divisible by $\char \, k$, but we will not actually use this case of the theorem.}  

So suppose $(P,Q) \in A_1 \times A_2$ is in $\ker \phi$.  Pick $\bar P \in A$ such that $\pi_1(\bar P) = P$.  Then it suffices to show that $Q = \pi_2(\bar P + R)$ for some $R \in \ker \pi_1$, because then \[(P,Q) = (\pi_1(\bar P + R), \pi_2(\bar P + R)).\]  
Now we compute
\[\tilde \pi_2 (\pi_2(\bar P) - Q) = \tilde\pi_1(\pi_1(\bar P)) - \tilde \pi_2 (Q) = \tilde \pi_1(P) - \tilde \pi_2(Q) = \phi(P,Q) = 0,\]
showing that $\pi_2(\bar P) - Q$ is contained in $\pi_2(\ker \pi_1) = \ker \tilde \pi_2$, as desired.  
\end{proof}

\begin{theorem}\label{samedeg}
If $A \to A_1$ and $A \to A_2$ are isogenies of the same degree $n$, and if $n$ is invertible in $k$, then $[A_1] = [A_2]$ in $K_0(A)$.  
\end{theorem}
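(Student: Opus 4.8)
The plan is to deduce the statement from Lemma \ref{key} by an induction that proves something stronger: for every $B$ isogenous to $A$ and every two finite subgroup schemes $K, K' \subseteq B$ of the same order $n$ coprime to $\char\,k$, one has $[B/K] = [B/K']$ in $K_0(A)$. Since $A_i \simeq A/\ker\pi_i$ and $\deg\pi_i = |\ker\pi_i| = n$, the theorem is the special case $B = A$. Because $n$ is invertible, each such $K$ is \'etale, hence an honest finite abelian subgroup of $B(k)$, and $B[n] \cong (\Z/n\Z)^{2g}$; this is what makes the counting and primary-decomposition arguments below legitimate, and it is exactly the hypothesis that removes the caveat in the footnote to Lemma \ref{key}. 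I would run the induction on $n$, carrying all $B \in \Ob(\CC_A)$ along simultaneously, since the inductive hypothesis gets applied to auxiliary quotients such as $B/L$, which are again isogenous to $A$.

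The base case $n = \ell$ prime is where Lemma \ref{key} does the real work. If $K_1 = K_2$ there is nothing to prove, so assume $K_1 \neq K_2$; then $K_1 \cap K_2 = 0$ and $W := K_1 + K_2 \cong (\Z/\ell\Z)^2$. The plane $W$ contains $\ell + 1 \geq 3$ subgroups of order $\ell$, so I may pick one, $K_3$, distinct from $K_1$ and $K_2$; then $K_3$ is transverse to each $K_i$ with $K_i + K_3 = W$. Applying Lemma \ref{key} to the pairs of isogenies with kernels $(K_1, K_3)$ and $(K_2, K_3)$ gives $[B] = [B/K_1] + [B/K_3] - [B/W]$ and $[B] = [B/K_2] + [B/K_3] - [B/W]$, and subtracting yields $[B/K_1] = [B/K_2]$.

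For the inductive step I would peel off a prime, in two cases. If $n$ has at least two distinct prime factors, write $n = n_1 n_2$ with $\gcd(n_1, n_2) = 1$ and $n_1, n_2 > 1$; every order-$n$ subgroup then decomposes canonically as $K = K_1 \oplus K_2$ with $|K_i| = n_i$. Using $B/K \cong (B/K_2)/\overline{K_1}$ and the inductive hypothesis at order $n_1 < n$ applied to $B/K_2$, the class $[B/K]$ is unchanged when $K_1$ is replaced by any other order-$n_1$ subgroup while $K_2$ is held fixed; symmetrically for $K_2$. Chaining these two moves connects any $K$ to any $K'$. If instead $n = \ell^a$ with $a \geq 2$, then for a fixed order-$\ell$ subgroup $L$ and any order-$n$ subgroup $K \supseteq L$ I have $B/K \cong (B/L)/(K/L)$ with $|K/L| = \ell^{a-1} < n$, so by the inductive hypothesis applied to $B/L$ the class $[B/K]$ depends only on $L$, not on $K$; write $\phi(L)$ for this common value, so that $[B/K] = \phi(L)$ for every order-$\ell$ subgroup $L \subseteq K$. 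Since any two order-$\ell$ subgroups $L, L'$ satisfy $|L + L'| \leq \ell^2 \leq n$, they extend to a common order-$n$ subgroup inside $B[n] \cong (\Z/n\Z)^{2g}$, whence $\phi(L) = \phi(L')$; thus all order-$n$ quotients of $B$ have the same class.

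The conceptual obstacle, and the reason the induction is arranged this way, is that a priori $[B/K]$ could depend on more of $K$ than its order --- for instance on its isomorphism type as an abelian group --- so the entire point is to show this dependence collapses. The one substantive input is the prime case through Lemma \ref{key}, together with the elementary fact that a plane over $\F_\ell$ has at least three lines; everything else is bookkeeping, the only delicate point being to run the induction over all $B$ isogenous to $A$ at once so that the hypothesis is available for the auxiliary quotients $B/L$ and $B/K_2$.
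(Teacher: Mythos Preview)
Your proof is correct and follows the same overall strategy as the paper: settle the prime case via Lemma \ref{key} and the ``third line in an $\F_\ell$-plane'' trick, then induct on $n$. The base case is essentially identical in both arguments.

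The inductive step is organized differently. The paper chooses a single auxiliary subgroup $C\subset A[n]$ of order $n$ meeting both $\ker\pi_1$ and $\ker\pi_2$ nontrivially; each pair $\{\ker\pi_i,C\}$ then shares an order-$\ell$ subgroup $H$, so the two isogenies factor through $A'=A/H$ and one applies the inductive hypothesis at degree $n/\ell$ on $A'$ to get $[A_1]=[A/C]=[A_2]$. You instead split into the coprime-factorization case and the prime-power case, peeling off one prime at a time via $B/K\cong(B/L)/(K/L)$. Both schemes work; the paper's is shorter, while yours is more explicit about two points the paper leaves tacit: that the induction must be carried over all $B$ isogenous to $A$ (since the hypothesis is invoked on quotients like $A/H$ or $B/L$), and that the \'etaleness hypothesis is exactly what makes the counting arguments and Lemma \ref{key} go through cleanly. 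Your care on both points is well placed.
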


\begin{proof}
First consider the case where the isogenies have prime degree $\ell$.  We may assume then that $\ker \pi_1 \neq \ker \pi_2$.  By Lemma \ref{key} we have
\[[A] + [A_3] = [A_1] + [A_2],\]
where $A_3 = A/(\ker \pi_1 + \ker \pi_2)$.  But the same argument works for any two distinct order $\ell$ subgroup schemes of $\ker \pi_1 + \ker \pi_2 \subset A[\ell]$.  Since $\ell$ is invertible in $k$, we can find a third such subgroup $C \subset \ker \pi_1 + \ker \pi_2$, and we have: 
\[[A_1] + [A/C] = [A] + [A_3] = [A_2] + [A/C],\]
and hence $[A_1] = [A_2]$.

The general case proceeds by induction on the degree.  If $A \to A_1$ and $A \to A_2$ have degree $n$, we can find a subgroup $C \subset A[n]$ of order $n$ which intersects non-trivially with $\ker \pi_1$ and $\ker \pi_2$.  Indeed, if $n = \ell^a$ is a prime power then one can take $C$ to contain any two points $P_1$ and $P_2$ of order $\ell$ in $\ker \pi_1$ and $\ker \pi_2$, respectively.  If $n = \prod \ell_i^{a_i}$ is not a prime power, then one can take $C$ to be generated by points of order $\ell_1$ and $\ell_2$ in $\ker \pi_1$ and $\ker \pi_2$, respectively.  Then the isogenies $A \to A_1$ and $A \to A/C$ factor through isogenies $A' \to A_1$ and $A' \to A/C$ of degree $n/\ell$ for some prime $\ell$.  By induction, $[A_1] = [A/C]$ and similarly $[A/C] = [A_2]$, so $[A_1] = [A_2]$.    
\end{proof}


\section{Simple isogeny classes}
In this section we let $\tilde A$ be an isogeny class of simple abelian varieties of dimension $g$.  Choose any $A \in \tilde A$ and set $D  = \End(A) \otimes_\Z \Q$.  Then $D$ is a division algebra whose isomorphism class depends only on $\tilde A$.  The degree map $\End(A) \to \Z$ extends to a multiplicative map $\deg: D^\times \to \Q_{+}$. 

\begin{remark}{
There may be elements of $\deg(D^\times) \cap \Z$ which are not of the form $\deg(\alpha)$ for some $\alpha \in \End(A)$.}  
\end{remark}    

\begin{lemma}\label{enddeg}
If $f: A_1 \to A_2$ is an isogeny in $\tilde A$ of degree $n$, such that $n$ is invertible in $k$ and $n = \deg(\beta)$ for some $\beta \in D^\times$, then $[A_1] = [A_2]$ in $K_0(A)$.  
\end{lemma}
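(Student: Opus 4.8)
The plan is to reduce everything to two applications of Theorem \ref{samedeg}. The guiding idea is that Theorem \ref{samedeg} lets me replace the given isogeny $f\colon A_1 \to A_2$ by \emph{any} isogeny out of $A_1$ of degree $n$, so it suffices to build a convenient one out of the algebra element $\beta$. As the Remark warns, $\beta$ need not lie in $\End(A_1)$, so I cannot hope to realize it directly as an endomorphism of degree $n$; instead I will split a cleared-denominators version of $\beta$ into a degree-$n$ piece and a correction of degree $m^{2g}$, and then peel off the correction using Theorem \ref{samedeg} a second time.

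First I would transport $\beta$ to $\End(A_1)\otimes\Q$ via the canonical isomorphism with $D$ (under which $\deg$ is preserved, since $\deg$ depends only on $\tilde A$), and choose a positive integer $m$, invertible in $k$, with $\alpha \ce m\beta \in \End(A_1)$. Such an $m$ exists even in characteristic $p$: since $\deg\beta = n$ is prime to $p$, the quasi-isogeny $\beta$ induces an isomorphism on the $p$-divisible group $A_1[p^\infty]$, so $\beta$ is already $p$-integral and its denominator is prime to $p$. By multiplicativity of $\deg$ together with $\deg[m] = m^{2g}$, the endomorphism $\alpha$ is an isogeny of degree $m^{2g}n$.

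Next I would factor $\alpha$. Because $\deg\alpha = m^{2g}n$ is prime to $p$, the kernel $\ker\alpha$ is étale, hence a genuine finite abelian group of order $m^{2g}n$, and therefore contains a subgroup $C$ of order $n$. Setting $B \ce A_1/C$, the quotient map $q\colon A_1 \to B$ has degree $n$ and $\alpha$ factors as $\alpha = p\circ q$ with $p\colon B \to A_1$ an isogeny of degree $m^{2g}$. Now come the two applications of Theorem \ref{samedeg}: the isogenies $q\colon A_1 \to B$ and $f\colon A_1 \to A_2$ both have source $A_1$ and invertible degree $n$, so $[B] = [A_2]$; and the isogenies $p\colon B \to A_1$ and $[m]\colon B \to B$ both have source $B$ and invertible degree $m^{2g}$, so $[A_1] = [B]$. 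Chaining gives $[A_1] = [A_2]$, as desired.

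The routine points are the factorization and the bookkeeping of degrees; the one step that genuinely needs care is the choice of $m$ invertible in $k$. This is automatic in characteristic $0$, and in characteristic $p$ I expect it to be exactly why the hypothesis asks for $n$ invertible: primality of $n$ to $p$ forces $\beta$ to act invertibly on $A_1[p^\infty]$ and hence to be $p$-integral. I would verify that integrality claim carefully, as it is the crux that keeps the argument uniform across characteristics.
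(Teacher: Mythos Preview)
Your approach is close to the paper's, which writes $\beta = ab^{-1}$ with $a,b\in\End(A_1)\cap\End(A_2)$ and $\deg(a)$ invertible in $k$, then compares the single pair $b\circ f\colon A_1\to A_2$ and $a\colon A_1\to A_1$ (both of degree $\deg(a)=n\deg(b)$) via \emph{one} application of Theorem~\ref{samedeg}. Your version is the special case $b=m\in\Z$, $a=m\beta$; and once you have $\alpha=m\beta\in\End(A_1)$ you may skip the factorization through $B$ altogether, since $mf\colon A_1\to A_2$ and $\alpha\colon A_1\to A_1$ already have the common degree $m^{2g}n$. The detour through $B$ is correct but unnecessary.

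The substantive gap is exactly the step you flag. Your argument that $\beta$ is $p$-integral is not correct: having $\deg(\beta)$ a $p$-adic unit only says that $\beta$ acts as a quasi-isogeny of degree prime to $p$ on $A_1[p^\infty]$, not that it is an actual endomorphism (let alone an automorphism). For a concrete failure, take $E$ ordinary with $\End(E)=\O_K$ and $p=\pi\bar\pi$ split in $K$; then $\beta=\ell\,\pi/\bar\pi$ (for any prime $\ell\neq p$) has $\deg(\beta)=\Nm_{K/\Q}(\beta)=\ell^2$ prime to $p$, yet $v_{\bar\pi}(\beta)=-1$, so no prime-to-$p$ integer $m$ makes $m\beta\in\O_K$. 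On $E[p^\infty]\cong\mu_{p^\infty}\times\Q_p/\Z_p$ this $\beta$ is a genuine endomorphism on one factor and a non-integral quasi-isogeny on the other. The paper's proof is equally terse at this point --- it simply asserts the factorization with $\deg(a)$ invertible --- so in characteristic~$0$ both arguments are complete, but in characteristic~$p$ one cannot expect to work with the \emph{given} $\beta$; what is really needed is that every $n\in\deg(D^\times)\cap\Z$ prime to $p$ is realized as $\deg(\beta')$ for some $p$-integral $\beta'$, and that requires a separate argument.
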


\begin{proof}
Write $\beta  = ab^{-1}$ with $a,b \in \End(A_1) \cap \End(A_2)$, and with $\deg(a)$ invertible in $k$.  Here we are thinking of $\End(A_1)$ and $\End(A_2)$ as abstract rings embedded in $D$.  Then the composition
\[A_1 \stackrel{f}{\longrightarrow} A_2 \stackrel{b}{\longrightarrow} A_2\]
has the same degree as $a: A_1 \to A_1$.  By Theorem \ref{samedeg}, $[A_1] = [A_2]$.  \end{proof}

To compute $G(A) = \ker(K_0( A) \to \Z)$, it is convenient to choose $A \in \tilde A$ which is principally polarizable, so that $A \simeq \hat A$.  We write $\deg(D)$ for the submonoid $\deg(D^\times) \cap \Z$ of the monoid $\ZZ$ of positive integers under multiplication.  Note that $\ZZ\big/\deg(D)$ is a group.  

\begin{lemma}\label{dieu}
Suppose $f : A^n \to A^n$ is an isogeny.  Then $\deg(f) \in \deg(D)$.  
\end{lemma}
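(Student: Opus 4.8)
The plan is to regard $f$ as an element of the unit group $\GL_n(D) = (\End(A^n)\otimes\Q)^\times$ of the matrix algebra $\Mat_n(D) = \End(A^n)\otimes\Q$, and to compute its degree by reducing it to diagonal form via Dieudonn\'e's elimination over the noncommutative division ring $D$ (whence the label of the lemma). First I would extend the degree map to a homomorphism $\deg \colon \GL_n(D) \to \QQ$ in exactly the way the section extends it to $D^\times$: for any prime $\ell$ invertible in $k$, the action on the $\ell$-adic Tate module gives $\deg(g) = \det\!\big(g \mid V_\ell(A^n)\big)$, which is multiplicative, takes values in $\QQ$, is independent of $\ell$, and agrees with $\#\ker$ on genuine isogenies. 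Three properties of this homomorphism will drive the proof: (i) it is multiplicative; (ii) every unipotent element has degree $1$, since it acts on $V_\ell(A^n)$ with determinant $1$; and (iii) a diagonal element $\mathrm{diag}(d_1,\dots,d_n)$ with $d_i \in D^\times$ acts block-diagonally on $V_\ell(A^n) = \bigoplus_i V_\ell(A)$, so $\deg\!\big(\mathrm{diag}(d_1,\dots,d_n)\big) = \prod_i \deg(d_i) = \deg(d_1\cdots d_n)$, the last equality using multiplicativity of $\deg$ on $D^\times$.

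Next I would run Gaussian elimination over $D$. Since $f$ is an isogeny it admits a quasi-inverse in $\Mat_n(D)$, so $f \in \GL_n(D)$; by the classical fact that $\GL_n$ of a division ring is generated by the elementary matrices $E_{ij}(a)$ ($a \in D$, $i \neq j$) together with the diagonal matrices, I can left- and right-multiply $f$ by elementary matrices to bring it to diagonal form $\mathrm{diag}(d_1,\dots,d_n)$ with each $d_i \in D^\times$ (invertibility of the $d_i$ being forced by that of $f$). Each $E_{ij}(a)$ is unipotent, so by (ii) it has degree $1$, and hence by (i) these operations leave $\deg(f)$ unchanged. Combining with (iii) yields $\deg(f) = \deg(\beta)$ for $\beta := d_1\cdots d_n \in D^\times$, so $\deg(f) \in \deg(D^\times)$. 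Finally, $f$ being a genuine isogeny, $\deg(f) = \#\ker f$ is a positive integer, whence $\deg(f) \in \deg(D^\times)\cap \Z = \deg(D)$, as claimed.

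Conceptually this is just the statement that the reduced norm of the central simple algebra $\Mat_n(D)$ has the same image as that of $D$, with the Dieudonn\'e elimination supplying the explicit reduction; and since $\deg$ agrees with the determinant on $V_\ell$ in every characteristic, the argument is insensitive to $\char\, k$. The main obstacle will be step two: the noncommutative elimination must be carried out multiplying pivots on the correct side when clearing entries over $D$, and one must verify that elementary operations really do preserve the degree, which is where properties (i) and (ii) are used. Everything else is formal, so I would keep the write-up focused on the careful statement of the degree homomorphism and on citing the elimination lemma rather than reproving it.
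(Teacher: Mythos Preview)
Your argument is correct and is morally the same as the paper's, but the packaging differs in a way worth noting. The paper invokes the reduced-norm formalism directly: it uses the formula $\deg(f) = (\Nm_{F/\Q}\circ\Nrd_n(M))^{2g/de}$ on $\GL_n(D)$ and the analogous formula $\deg(g) = (\Nm_{F/\Q}\circ\Nrd(g))^{2g/de}$ on $D^\times$, and then cites Dieudonn\'e's theorem that $\Nrd_n(\GL_n(D)) = \Nrd(D^\times)$ as a black box. You instead work with the $\ell$-adic determinant and rerun the Gaussian elimination that underlies Dieudonn\'e's theorem, so you never have to name the reduced norm at all. Your route is more self-contained and makes the mechanism visible; the paper's is shorter and isolates exactly which structural fact about $\GL_n(D)$ is being used. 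Both ultimately rest on the same Dieudonn\'e elimination, which is why the lemma carries that label.
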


\begin{proof}
The isogeny $f$ can be thought of as an element $M \in \GL_n(D)$.  If $D$ is commutative then one has the formula
\[\deg(f) = \deg(\det M).\]
In the general case, there is no well behaved determinant map $\GL_n(D) \to D^\times$.  Instead, we have \[\deg(f) = (\Nm_{F/\Q}\circ \Nrd_n(M))^{2g/de},\] 
where $F$ is the center of $D$, $d = [F: \Q]$, $e^2 = [D: F]$, and $\Nrd_n: \GL_n(D) \to F^\times$ is the reduced norm; see \cite[\S 19]{mumford}.  On the other hand, for $g \in D$, we have
\[\deg(g) = (\Nm_{F/\Q} \circ \Nrd(g))^{2g/de},\]
where $\Nrd : D^\times \to F^\times$ is the reduced norm. 
The lemma then follows from Dieudonn\'e's result \cite{dieudonne} that $\Nrd_n(\GL_n(D)) = \Nrd(D^\times)$.         
\end{proof}

\begin{lemma}
Let $B \in \CC_A$ and let $f: A^n \to B$ be an isogeny.  Then the class of $\deg(f)$  in $\ZZ\big/\deg(D)$ is independent of the choice of $f$.   
\end{lemma}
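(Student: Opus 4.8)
The plan is to reduce the independence statement to Lemma~\ref{dieu} by composing with a single auxiliary isogeny, and then exploiting the group structure of $\ZZ\big/\deg(D)$. Suppose $f, g \colon A^n \to B$ are two isogenies; I must show that $\deg(f)$ and $\deg(g)$ have the same image in $\ZZ\big/\deg(D)$. Since $B \in \CC_A$ is by definition isogenous to $A^n$, I would first fix any isogeny $\psi \colon B \to A^n$ in the opposite direction. The point of this choice is that it converts the two isogenies $f,g$ into honest endomorphisms of $A^n$, to which Lemma~\ref{dieu} directly applies.

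Next I would observe that $\psi \circ f$ and $\psi \circ g$ are isogenies $A^n \to A^n$, i.e.\ nonzero endomorphisms of $A^n$. By Lemma~\ref{dieu}, both $\deg(\psi f)$ and $\deg(\psi g)$ lie in the submonoid $\deg(D) \subset \ZZ$. Using multiplicativity of the degree under composition, $\deg(\psi f) = \deg(\psi)\deg(f)$ and $\deg(\psi g) = \deg(\psi)\deg(g)$, so both $\deg(\psi)\deg(f)$ and $\deg(\psi)\deg(g)$ represent the identity element of the quotient $\ZZ\big/\deg(D)$. Invoking the fact, recorded just before Lemma~\ref{dieu}, that $\ZZ\big/\deg(D)$ is a \emph{group} and not merely a monoid, I can then cancel the common factor $\deg(\psi)$ to conclude that $\deg(f)$ and $\deg(g)$ have the same class, as desired.

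The construction is essentially forced, so I do not expect a genuine obstacle; the one point that requires care is precisely the legitimacy of cancelling $\deg(\psi)$, which is valid only because $\ZZ\big/\deg(D)$ is a group. This is exactly where the preceding work is doing its job: Lemma~\ref{dieu}, via Dieudonné's identification $\Nrd_n(\GL_n(D)) = \Nrd(D^\times)$, ensures that moving from the matrix algebra $\Mat_n(D)$ down to $D$ does not enlarge the set of achievable degrees, so that the relevant obstruction lives in the single group $\ZZ\big/\deg(D)$ uniformly in $n$. Without that uniformity, the classes of $\deg(\psi f)$ and $\deg(\psi g)$ could fail to be trivial and the cancellation argument would break down.
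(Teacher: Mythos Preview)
Your argument is correct and is genuinely different from the paper's. You fix an isogeny $\psi \colon B \to A^n$ in the reverse direction and apply Lemma~\ref{dieu} to the endomorphisms $\psi f$ and $\psi g$ of $A^n$, then cancel $\deg(\psi)$ in the group $\ZZ\big/\deg(D)$. The paper instead routes everything through a polarization: it picks $\phi_L \colon B \to \hat B$, pulls it back to a polarization $\phi_{f^*L}$ on $A^n$, and uses Lemma~\ref{dieu} to deduce the relation $\deg(f)^2\deg(\phi_L) \equiv 1$ in $\ZZ\big/\deg(D)$; a second application with the composite $A^n \xrightarrow{f} B \xrightarrow{\phi_L} \hat B \xrightarrow{\hat g} \hat A^n \simeq A^n$ then gives $\deg(f)\deg(g)\deg(\phi_L) \equiv 1$, and dividing the two relations yields $\deg(f) \equiv \deg(g)$. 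Your route is shorter and avoids polarizations and duals altogether. The paper's route is longer but yields as a byproduct the identity $\deg(f)^2\deg(\phi_L) \equiv 1$ (equation~(\ref{isopol})), which is exactly what is needed later in Lemma~\ref{dual} to show $\dist_A(\hat B) = \dist_A(B)^{-1}$; with your approach that identity would have to be established separately.
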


\begin{proof}
Let $\phi_L: B \to \hat B$ be a polarization on $B$.  Then
\[\deg (\phi_{f^*L}) = \deg(\hat f \phi_L f) = \deg(f)^2 \deg(\phi_L).\] 
The isogeny $\phi_{f^*L} : A^n \to \hat A^n \cong A^n$ has degree in $\deg(D)$ by Lemma \ref{dieu}.     
Hence 
\begin{equation}\label{isopol}
\deg(f)^2 \deg(\phi_L) = 1 \in \ZZ\big/\deg(D).
\end{equation}
If $g: A^n \to B$ is another isogeny, then the composite map
\[h: A^n \stackrel{f}{\longrightarrow} B \stackrel{\phi_L}{\longrightarrow} \hat B \stackrel{\hat g}{\longrightarrow} \hat A^n \stackrel{\psi}{\longrightarrow} A^n\]
also has degree in $\deg(D)$.  Here, $\psi$ is any principal polarization.    
So modulo $\deg(D)$, we have  
\[1 = \deg(f)\deg(\hat g)\deg(\phi_L) = \deg(f)\deg(g)\deg(g)^{-2} = \deg(f)/\deg(g),\]
and therefore $\deg(f) = \deg(g)$ in $\ZZ\big/\deg(D)$.  
\end{proof}

\begin{definition}
If $B \in \CC_A$, then the class of $\deg(f)$ in $\ZZ\big/\deg(D)$, for any isogeny $f: A^n \to B$, is denoted $\dist_A(B)$ and is called the {\it distance} of $B$ from $A$.  
\end{definition}

\begin{lemma}\label{dual}
If $B \in \CC_A$, then $\dist_A(\hat B) = \dist_A( B)^{-1}$. 
\end{lemma}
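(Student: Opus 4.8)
The plan is to produce an isogeny onto $\hat B$ by composing a given isogeny onto $B$ with a polarization, and then read off the result from the relation \eqref{isopol} that was established in the course of proving that $\dist_A$ is well-defined. First I would fix an isogeny $f\colon A^n \to B$, so that by definition $\dist_A(B) = \deg(f)$ in $\ZZ\big/\deg(D)$. Since $B$ is an abelian variety over an algebraically closed field, it carries a polarization $\phi_L\colon B \to \hat B$, which is an isogeny.

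The key observation is that the composite $\phi_L \circ f\colon A^n \to \hat B$ is itself an isogeny from $A^n$, and so it may be used to compute $\dist_A(\hat B)$. Using multiplicativity of degree under composition, together with the independence of the choice of isogeny proved in the previous lemma, I would obtain
\[\dist_A(\hat B) = \deg(\phi_L \circ f) = \deg(\phi_L)\,\deg(f) \in \ZZ\big/\deg(D).\]

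Finally I would invoke \eqref{isopol}, which asserts $\deg(f)^2\,\deg(\phi_L) = 1$ in $\ZZ\big/\deg(D)$. Multiplying the displayed identity by $\deg(f)$ gives
\[\deg(f)\,\dist_A(\hat B) = \deg(f)^2\,\deg(\phi_L) = 1,\]
so that $\dist_A(\hat B) = \deg(f)^{-1} = \dist_A(B)^{-1}$, as claimed.

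In truth there is no real obstacle here: essentially all of the content has already been absorbed into equation \eqref{isopol}, and the statement is a clean consequence. The only points requiring any care are that $\phi_L \circ f$ is a genuine isogeny $A^n \to \hat B$ (so that $\dist_A(\hat B)$ may legitimately be computed with it, by well-definedness) and that $\deg$ is multiplicative under composition, both of which are immediate.
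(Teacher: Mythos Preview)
Your proof is correct and follows essentially the same route as the paper: compose an isogeny $A^n \to B$ with a polarization $\phi_L\colon B \to \hat B$ to compute $\dist_A(\hat B) = \dist_A(B)\deg(\phi_L)$, then apply \eqref{isopol} to identify this with $\dist_A(B)^{-1}$. The paper compresses these steps into two lines, but the argument is identical.
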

\begin{proof}
From the sequence
\[A^n \longrightarrow B \stackrel{\phi_L}{\longrightarrow} \hat B,\]
we obtain $\dist_A(\hat B) = \dist_A(B)\deg(\phi_L)$, which is equal to $\dist_A(B)^{-1}$, by (\ref{isopol}).
\end{proof}

\begin{lemma}
Let $B \in \CC_A$ and let $g: B \to A^n$ be an isogeny.  Then the class of $\deg(g)$ in $\ZZ\big/\deg(D)$ is independent of the choice of $g$ and is equal to $\dist_A(B)^{-1}$.     
\end{lemma}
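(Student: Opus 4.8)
The plan is to reduce everything to Lemma \ref{dieu} by composing $g$ with an isogeny running in the opposite direction. First I would fix any isogeny $f \colon A^n \to B$; such an $f$ exists because $B \in \CC_A$ has dimension $n \dim A$, and by the definition of distance the class of $\deg(f)$ in $\ZZ\big/\deg(D)$ is precisely $\dist_A(B)$. The point is then that the composite $g \circ f \colon A^n \to A^n$ is an isogeny from $A^n$ to itself, so Lemma \ref{dieu} applies to it directly.

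Carrying this out, multiplicativity of the degree gives $\deg(g)\deg(f) = \deg(g \circ f)$, and by Lemma \ref{dieu} the right-hand side lies in $\deg(D)$. Hence in the group $\ZZ\big/\deg(D)$ we have $\deg(g)\deg(f) = 1$, so that
\[\deg(g) = \deg(f)^{-1} = \dist_A(B)^{-1}.\]
Since the right-hand side makes no reference to $g$, this single computation simultaneously proves both assertions: the class of $\deg(g)$ in $\ZZ\big/\deg(D)$ is independent of the choice of $g$, and it equals $\dist_A(B)^{-1}$.

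I do not expect any genuine obstacle here: all of the substantive content has already been packaged into Lemma \ref{dieu}, and the statement is essentially the mirror image of the lemma defining $\dist_A(B)$, with the roles of source and target of the isogeny interchanged. The only small point worth checking is that the dimensions match, so that $g \circ f$ really is a self-isogeny of $A^n$ to which Lemma \ref{dieu} can be applied; this is immediate from $\dim B = n \dim A$.
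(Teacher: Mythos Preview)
Your proof is correct and in fact more direct than the paper's. The paper argues via duality: it passes from $g \colon B \to A^n$ to the dual isogeny $\hat g \colon \hat A^n \to \hat B$, uses the standing assumption that $A$ is principally polarizable to identify $\hat A^n \cong A^n$, and then reads off $\deg(g) = \deg(\hat g) = \dist_A(\hat B)$, finally invoking Lemma~\ref{dual} to get $\dist_A(\hat B) = \dist_A(B)^{-1}$. Your argument bypasses duality and polarizations entirely: you simply compose $g$ with any isogeny $f \colon A^n \to B$ and apply Lemma~\ref{dieu} to $g \circ f$. This is more elementary and shows that the result does not actually depend on the choice of a principally polarizable $A$ in the isogeny class. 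The paper's route, on the other hand, fits naturally into its narrative emphasis on duality (Lemma~\ref{dual} and the later Theorem~\ref{duality}), so each proof has its own logic in context.
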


\begin{proof}
We have $\deg(g) = \deg(\hat g)$, with $\hat g: \hat A^n \to \hat B$ the dual isogeny.  Since $\hat A^n \cong A^n$, the class of $\deg(g)$ in $\ZZ\big/\deg(D)$ is independent of $g$ and equal to $\dist_A(\hat B)$.  Now use the previous lemma. 
\end{proof}

\begin{definition}
If $B \in \CC_A$, then the class of $\deg(f)$ in $\ZZ\big/\deg(D)$, for any isogeny $f:B \to A^n$, is denoted $\deg_A(B)$.  
\end{definition}

\begin{remark}\label{context}
We have $\deg_A(B)= \dist_A(B)^{-1}$ in $\ZZ\big/\deg(D)$.  Context dictates which invariant is most convenient to use.   
\end{remark}

\begin{remark}\label{indeprmk}
The map $\deg_A : \Ob(\CC_A) \to \ZZ\big/\deg(D)$ depends on the choice of $A$.  But note that if $f \colon B_1 \to B_2$ is an isogeny in $\CC_A$, then  
\[\frac{\deg_A(B_1)}{\deg_A(B_2)}\]
is equal to the class of $\deg(f) \in \ZZ\big/\deg(D)$ and hence is independent of the choice of $A$.  
\end{remark}

\begin{corollary}\label{ends}
If $B \in \CC_A$ and $\alpha \in \End(B)$ is any isogeny, then $\deg(\alpha) \in \deg(D)$.  
\end{corollary}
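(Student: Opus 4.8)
The plan is to deduce this immediately from the well-definedness of the invariant $\deg_A$ established just above, using only the multiplicativity of the degree. The key observation is that precomposing an isogeny to $A^n$ with an endomorphism $\alpha$ of $B$ produces another isogeny to $A^n$ of the same source, so the two must have the same $\deg_A$-class; comparing them will force $\deg(\alpha)$ to be trivial modulo $\deg(D)$.

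Concretely, I would first choose any isogeny $f \colon B \to A^n$, which exists since $B \in \CC_A$. Because $\alpha \in \End(B)$ is assumed to be an isogeny, the composite $f \circ \alpha \colon B \to A^n$ is again an isogeny from $B$ to $A^n$. By the lemma preceding the definition of $\deg_A$, the class of the degree of an isogeny $B \to A^n$ in $\ZZ\big/\deg(D)$ does not depend on the chosen isogeny, so $\deg(f\alpha)$ and $\deg(f)$ represent the same class $\deg_A(B)$. On the other hand, the degree is multiplicative, so $\deg(f\alpha) = \deg(f)\deg(\alpha)$ as positive integers, and hence also in the quotient group $\ZZ\big/\deg(D)$.

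Combining these two facts gives $\deg(f)\deg(\alpha) = \deg(f)$ in $\ZZ\big/\deg(D)$, and cancelling the (invertible) class $\deg(f)$ in this group yields $\deg(\alpha) = 1$, i.e.\ $\deg(\alpha) \in \deg(D)$. Equivalently, one may simply invoke Remark \ref{indeprmk} with $B_1 = B_2 = B$ and the isogeny $\alpha$, since then $\deg_A(B_1)/\deg_A(B_2)$ is manifestly trivial while also equalling the class of $\deg(\alpha)$. There is no real obstacle here: the corollary is a formal consequence of the independence statement, and the only ingredient beyond it is the elementary multiplicativity of the degree under composition of isogenies.
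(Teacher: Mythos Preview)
Your argument is correct and is precisely the intended one: the paper gives no separate proof because the corollary is immediate from the preceding well-definedness lemma and Remark~\ref{indeprmk}, exactly as you observe. Applying Remark~\ref{indeprmk} with $B_1=B_2=B$ and $f=\alpha$ is the cleanest way to phrase it.
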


\section{Determination of $G(A)$ in characteristic 0}
Now we connect the notion of degree with the Grothendieck group.

\begin{proposition}\label{additive}
The function $\deg_A: \Ob(\CC_A) \to \ZZ\big/\deg(D)$ is additive.   
\end{proposition}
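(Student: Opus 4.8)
The goal is to show that $\deg_A \colon \Ob(\CC_A) \to \ZZ\big/\deg(D)$ is additive, meaning that for every short exact sequence $0 \to B_1 \to B_2 \to B_3 \to 0$ in $\CC_A$ we have $\deg_A(B_2) = \deg_A(B_1)\deg_A(B_3)$ in the group $\ZZ\big/\deg(D)$ (written multiplicatively). The plan is to reduce the additivity to a statement purely about degrees of isogenies, using the structure established in the previous section.

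First I would unwind the definition: a short exact sequence $0 \to B_1 \to B_2 \to B_3 \to 0$ in $\CC_A$ consists of abelian varieties all isogenous to powers of $A$, together with an injection $i \colon B_1 \to B_2$ and a surjection $q \colon B_2 \to B_3$ with $\ker q = i(B_1)$. Pick isogenies $f_1 \colon B_1 \to A^{n_1}$ and $f_3 \colon B_3 \to A^{n_3}$, so that $\deg_A(B_i)$ is the class of $\deg(f_i)$ modulo $\deg(D)$. The key observation is that by Remark~\ref{indeprmk}, the ratio $\deg_A(B_1)/\deg_A(B_2)$ (resp.\ for $B_2, B_3$) is the class of the degree of any isogeny between the two varieties, independent of the choice of $A$. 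So it suffices to produce isogenies realizing the correct degrees and to check one multiplicative relation among them.

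The cleanest approach is to reduce to a single computation. Choose an isogeny $h \colon A^N \to B_2$ for a suitable $N = n_1 + n_3$; I would like to build $h$ compatibly with the extension. Concretely, composing the inclusion $i$ with an isogeny $A^{n_1} \to B_1$ (a dual/inverse of $f_1$ up to the monoid $\deg(D)$) gives a map into $B_2$ landing in the subobject, and lifting a splitting of the isogeny category sequence gives a complementary map $A^{n_3} \to B_2$; together these assemble into an isogeny $A^{n_1} \times A^{n_3} \to B_2$ whose degree I can control. The multiplicativity of $\deg$ on compositions, together with the short exact sequence relating the degrees of $B_1$, $B_2$, $B_3$ via the isogeny $B_1 \times B_3 \to B_2$ (which exists in the isogeny category and can be taken to be an actual isogeny after clearing denominators), then yields $\deg(B_2\text{-isogeny}) = \deg(B_1\text{-isogeny}) \cdot \deg(B_3\text{-isogeny})$ up to an element of $\deg(D)$.

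The main obstacle I anticipate is handling the possible non-splitting of the exact sequence $0 \to B_1 \to B_2 \to B_3 \to 0$ on the nose: in $\CC_A$ the sequence need not split, so I cannot simply write $B_2$ as a product. The fix is that the sequence does split in the isogeny category $\tilde\CC_A$, which is semisimple, so there is an isogeny $B_1 \times B_3 \to B_2$; its degree lies in $\deg(D)$ by Corollary~\ref{ends} only after I verify the relevant endomorphisms have degree in $\deg(D)$, and more precisely I must check that the discrepancy between the chosen isogeny and the product decomposition contributes a degree lying in $\deg(D)$, so that it vanishes in the quotient $\ZZ\big/\deg(D)$. Once that bookkeeping is in place, multiplicativity of $\deg$ under composition of isogenies gives the additive relation, and the well-definedness results of Section~4 guarantee the answer is independent of all auxiliary choices.
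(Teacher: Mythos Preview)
Your overall strategy coincides with the paper's: reduce additivity to showing that some isogeny between $B_2$ and $B_1 \times B_3$ has degree lying in $\deg(D)$, so that it disappears in the quotient. Where you remain vague is exactly the point that carries all the content. You write that the degree of your splitting isogeny $B_1 \times B_3 \to B_2$ ``lies in $\deg(D)$ by Corollary~\ref{ends} only after I verify the relevant endomorphisms have degree in $\deg(D)$'' and that you ``must check that the discrepancy \ldots\ contributes a degree lying in $\deg(D)$'', but you never say how. Corollary~\ref{ends} concerns endomorphisms of a single object, and your map $B_1 \times B_3 \to B_2$ is not one; so this is precisely the step that needs an idea, not just bookkeeping.

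The paper supplies that idea by going in the opposite direction and making the map explicit. With $j \colon B_1 \hookrightarrow B$ and $\pi \colon B \twoheadrightarrow B_2$, choose polarizations $\phi_L \colon B \to \hat B$ and $\phi_M \colon \hat B_1 \to B_1$ and set
\[
h = (\phi_M \,\hat j\, \phi_L,\ \pi) \colon B \longrightarrow B_1 \times B_2.
\]
Then $\ker h = \ker(\phi_M \hat j \phi_L) \cap \ker \pi = \ker(\phi_M \hat j \phi_L) \cap j(B_1)$, which $j$ identifies with $\ker(\phi_M \hat j \phi_L j)$. The composite $\phi_M \hat j \phi_L j$ is an honest endomorphism of $B_1$, so Corollary~\ref{ends} now applies on the nose and gives $\deg(h) \in \deg(D)$. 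Composing $h$ with $f_1 \times f_3 \colon B_1 \times B_2 \to A^{n_1+n_2}$ then yields $\deg_A(B) = \deg_A(B_1)\deg_A(B_2)$.

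Your isogeny-category splitting can in fact be pushed through as well: if $s \in \Hom(B_3,B_2)\otimes\Q$ satisfies $\pi s = 1$ and $n$ clears the denominator, then the kernel of $(j, ns) \colon B_1 \times B_3 \to B_2$ is identified, via the second projection, with $B_3[n]$, so $\deg(j,ns) = n^{2\dim B_3} = \deg([n]_A)^{n_3} \in \deg(D)$. But this computation is what your proposal omits, and the paper's polarization construction avoids it entirely by producing a map whose kernel is already the kernel of an endomorphism.
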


\begin{proof}
Let 
\[0 \to B_1 \stackrel{j}{\longrightarrow} B  \stackrel{\pi}{\longrightarrow} B_2 \to 0\]
be an exact sequence in $\CC_A$.  Let $\phi_M: \hat B_1 \to B_1$ and $\phi_L: B \to \hat B$ be polarizations and define $h: B \to B_1 \times B_2$ to be the map $\phi_M \hat j \phi_L \times \pi$.  From the sequence of isogenies  
\[B \stackrel{h}{\longrightarrow} B_1 \times B_2 \longrightarrow A^{n_1} \times A^{n_2} \cong A^n,\]
we conclude 
\[\deg_A(B) = \deg_A(B_1)\deg_A(B_2)\deg(h) \in \ZZ\big/\deg(D).\]
So it suffices to show that $\deg(h)$ is in $\deg(D)$. For this, note that $\ker h$ is isomorphic to the kernel of the isogeny $\phi_M \hat j \phi_L j \in \End(B_1)$.  Then by Corollary \ref{ends}, $\deg(h) \in \deg(D)$.  
\end{proof}

We therefore have a homomorphism $\deg_A : K_0(A) \to \ZZ\big/\deg(D)$.  By Remark \ref{indeprmk}, the restriction of $\deg_A$ to the dimension 0 subgroup $G(A) \subset K_0(A)$ is independent of $A$, so we write 
\[\deg: G(A) \to \ZZ\big/\deg(D).\] 
This homomorphism is surjective, since we work over an algebraically closed field.  In fact:  
\begin{theorem}\label{main}
Suppose $\char\,  k = 0$.  Then the degree map $\deg: G(A) \to \ZZ\big/\deg(D)$ is an isomorphism.
\end{theorem}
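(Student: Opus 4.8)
The plan is to prove injectivity of $\deg$, since its surjectivity has just been noted (over an algebraically closed field one can realize an isogeny $A \to B$ of any prescribed degree, so every class in $\ZZ\big/\deg(D)$ is hit by some $[B]-[A^{\dim B/g}]$). First I would record a convenient generating set for $G(A)$. For $B \in \CC_A$ put $m_B = \dim B / g$, so that $B$ is isogenous to $A^{m_B}$, and set $\epsilon(B) = [B] - m_B[A]$; since $[A^{m_B}] = m_B[A]$, the element $\epsilon(B)$ lies in $G(A)$. Writing $x \in G(A)$ as $\sum_i n_i[B_i]$ and using $\sum_i n_i m_{B_i} = 0$, one gets $x = \sum_i n_i \epsilon(B_i)$, so the $\epsilon(B)$ generate $G(A)$. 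The additivity $[B \times B'] = [B]+[B']$ yields $\epsilon(B)+\epsilon(B') = \epsilon(B\times B')$ and $\epsilon(A^m) = 0$. Grouping the positive and negative terms of $x$ into single factors and then padding by powers of $A$ to equalize dimensions, every element of $G(A)$ can therefore be written as $\epsilon(B) - \epsilon(B') = [B]-[B']$ for a pair of \emph{isogenous} objects $B, B' \in \CC_A$. By Remark \ref{indeprmk}, $\deg([B]-[B'])$ is the class of $\deg(g)$ in $\ZZ\big/\deg(D)$, for any isogeny $g \colon B \to B'$.

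With this reduction, injectivity amounts to the following claim, which is the heart of the matter: \emph{if $g\colon B \to B'$ is an isogeny between isogenous objects of $\CC_A$ with $\deg(g) \in \deg(D)$, then $[B] = [B']$ in $K_0(A)$.} Indeed, if $\deg(x)=1$ then the corresponding $g$ has $\deg(g)\in\deg(D)$, whence $x=[B]-[B']=0$. This claim generalizes Lemma \ref{enddeg} from the simple object $A$ to the isotypic object $A^N$, and I would follow the same strategy as there: manufacture from $g$ two isogenies out of $B$ of equal degree and invoke Theorem \ref{samedeg}. Here the characteristic-$0$ hypothesis enters, guaranteeing that all degrees are invertible in $k$ so that Theorem \ref{samedeg} applies.

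To carry this out, suppose $B$ is isogenous to $A^N$ and write $n = \deg(g) = \deg(\beta)$ with $\beta \in D^\times$. Under $\End(A^N)\otimes\Q \cong \Mat_N(D)$, the diagonal element $\mathrm{diag}(\beta,1,\dots,1)$ has the same reduced norm as $\beta$, hence by the degree formula of Lemma \ref{dieu} the same degree $n$; transporting along any isogeny $A^N \to B$ produces $M \in (\End(B)\otimes\Q)^\times$ with $\deg(M) = n$. Clearing denominators, write $M = ab^{-1}$ with $a,b \in \End(B)$ isogenies. Then $g\circ b \colon B \to B'$ and $a \colon B \to B$ are isogenies out of $B$ of the same degree $n\deg(b)$, so Theorem \ref{samedeg} gives $[B'] = [B]$, proving the claim and with it the injectivity of $\deg$.

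The main obstacle is the displayed claim, and within it the delicate point is the realization of the prescribed degree $n$ by a \emph{rational} self-isogeny of $B$: one must pass from $\beta \in D^\times$ to an invertible element of $\Mat_N(D)$ of the same degree. This is exactly what the diagonal embedding and the multiplicativity of the reduced norm on diagonal matrices provide, so that (in contrast to Lemma \ref{dieu}) no appeal to Dieudonn\'e's theorem is needed in this direction. Everything else is the bookkeeping of degrees under composition together with Theorem \ref{samedeg}.
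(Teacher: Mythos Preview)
Your argument is correct. The paper takes a slightly different route to the same end: it first observes that $K_0(A)$ is generated by classes of \emph{simple} objects $A'\in\tilde A$, and then uses Lemma~\ref{key} to establish the relation $[A_{nm}]=[A_n]+[A_m]-[A]$ (where $A_n$ denotes any simple quotient of $A$ at distance $n$), which allows any $\beta\in G(A)$ to be telescoped down to a difference $[A']-[A'']$ of two \emph{simple} objects; at that point Lemma~\ref{enddeg} applies verbatim. You instead collapse to a difference $[B]-[B']$ of arbitrary isotypic objects and then extend the trick of Lemma~\ref{enddeg} to $A^N$ via the diagonal embedding $\beta\mapsto\mathrm{diag}(\beta,1,\dots,1)\in\Mat_N(D)$. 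Your version is a bit more self-contained---it avoids the separate invocation of Lemma~\ref{key} and, as you note, needs only the trivial inclusion $\Nrd(D^\times)\subset\Nrd_N(\GL_N(D))$ rather than Dieudonn\'e's equality---while the paper's version has the virtue that once the reduction to simple objects is made, no new degree computation is required beyond what Lemma~\ref{enddeg} already supplies.
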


\begin{proof}
We write $A_n$ for any $A_n \in \tilde A$ such that $\dist_A(A_n) = n$.  The class $[A_n] \in K_0(A)$ is independent of the choice of $A_n$ by Theorem \ref{samedeg}.  By Lemma \ref{key}, we have, for any $m,n \in \ZZ$:
\begin{equation}\label{decomp}
[A_{nm}] = [A_n] + [A_m] - [A],
\end{equation} because we can always represent $A_n$ and $A_m$ by cyclic quotients of $A$ with non-intersecting kernels $C_n$ and $C_m$ (since $\char \, k = 0$), and $A_{mn}$ by the quotient $A/(C_m + C_n)$.  

Note that $K_0(A)$ is generated by classes $[A']$ of {\it simple} $A' \Ob(\CC_A)$.  Thus, any $\beta \in G(A)$, can be written as  
\[\beta = \sum_{i = 1}^r \left([A_{n_i}] - [A_{m_i}]\right) = [A_{\prod_i^r n_i}] + (r-1)[A] - [A_{\prod_i^r m_i}] - (r-1)[A] = [A'] - [A''], \]
for certain $A', A'' \in \tilde A$ .  If $\beta$ is also in the kernel of the degree map $G(A) \to \ZZ\big/\deg(D)$, then 
\[\deg_A(A')/\deg_A(A'') \in \deg(D).\] Equivalently, there is an isogeny $f: A' \to A''$ of degree $\deg(\alpha)$ for some $\alpha \in D$.  By Lemma \ref{enddeg}, $[A'] = [A'']$ and $\beta = 0$, showing that $G(A) \to \ZZ
\big/\deg(D)$ is injective, and hence an isomorphism.     
\end{proof}

As a corollary, we obtain Theorem \ref{duality}:
\begin{corollary}
The additive function $B \mapsto \hat B$ induces the inversion homomorphism on $G(A)$.  
\end{corollary}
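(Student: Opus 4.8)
The plan is to deduce the corollary directly from the main structural result, Theorem \ref{main}, together with the duality behaviour of the distance invariant already established in Lemma \ref{dual}. The statement asserts that the additive function $B \mapsto \hat B$ induces inversion on $G(A)$, so the first thing I would do is check that this function really is additive, hence descends to an endomorphism of $K_0(A)$. Dualizing a short exact sequence $0 \to B_1 \to B \to B_2 \to 0$ of abelian varieties yields $0 \to \hat B_2 \to \hat B \to \hat B_1 \to 0$, which is again exact in $\CC_A$; this gives the relation $[\hat B] = [\hat B_1] + [\hat B_2]$ in $K_0(A)$, so $B \mapsto \hat B$ is additive and induces a well-defined group homomorphism $\delta \colon K_0(A) \to K_0(A)$.

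Next I would observe that $\delta$ preserves the dimension grading, since $\dim \hat B = \dim B$, and therefore restricts to an endomorphism of the degree-zero subgroup $G(A)$. To identify this restriction with inversion, I would exploit the fact, proved in Theorem \ref{main}, that the degree map $\deg \colon G(A) \to \ZZ\big/\deg(D)$ is an \emph{isomorphism}. Thus it suffices to compare $\deg \circ \delta$ with $\deg$ composed with inversion on the target group $\ZZ\big/\deg(D)$. Concretely, for a class $\beta = [B] - [B'] \in G(A)$ with $B, B' \in \tilde A$, one has $\delta(\beta) = [\hat B] - [\hat B']$, and I would compute its degree using $\deg_A(\hat B) = \dist_A(\hat B)^{-1} = \dist_A(B) = \deg_A(B)^{-1}$, where the middle equality is exactly Lemma \ref{dual}. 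This shows $\deg(\delta(\beta)) = \deg_A(\hat B)/\deg_A(\hat B') = \deg_A(B)^{-1}/\deg_A(B')^{-1} = \big(\deg_A(B)/\deg_A(B')\big)^{-1} = \deg(\beta)^{-1}$.

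Since $\deg$ is injective on $G(A)$ and $\deg(\delta(\beta)) = \deg(\beta)^{-1} = \deg(-\beta)$ for every $\beta$, I can conclude that $\delta(\beta) = -\beta$, i.e.\ $\delta$ is inversion on $G(A)$. I do not expect any single step to be a serious obstacle here, since the real content lives upstream in Theorem \ref{main} and Lemma \ref{dual}; the main thing to be careful about is the reduction of an arbitrary element of $G(A)$ to a difference $[B] - [B']$ of simple classes, so that the formula $\deg_A(\hat B) = \deg_A(B)^{-1}$ applies term by term. This reduction is available from the computation in the proof of Theorem \ref{main}, where every class in $G(A)$ is written as $[A'] - [A'']$. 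The only subtlety worth flagging is that $\delta$ is \emph{not} inversion on all of $K_0(A)$, as remarked after Theorem \ref{duality}, because inversion does not respect the dimension grading; the claim is therefore genuinely a statement about the restriction to $G(A)$, and the grading-preservation of $\delta$ is what makes this restriction meaningful.
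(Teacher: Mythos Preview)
Your proof is correct and follows the same route as the paper: use Lemma \ref{dual} together with Remark \ref{context} to obtain $\deg_A(\hat B) = \deg_A(B)^{-1}$, then invoke the injectivity of $\deg$ from Theorem \ref{main}. The reduction of an arbitrary element of $G(A)$ to a single difference $[B]-[B']$ of simple classes is harmless but unnecessary, since Lemma \ref{dual} already applies to every $B \in \CC_A$ and $\deg_A$ is additive.
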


\begin{proof}
Since $\deg_A(\hat B) = \deg_A(B)^{-1}$, by Lemma \ref{dual} and Remark \ref{context}.  
\end{proof}

The following proposition gives a concrete description of $\ZZ\big/\deg(D)$.  
\begin{proposition}\label{concrete}
Let $A$ be a simple abelian variety of dimension $g$ and $D = \End( A)\otimes \Q$ its endomorphism algebra, with center $Z(D) = F$.  Write $e = [F: \Q]$ and $d^2 = [D: F]$.  Then
\[\ZZ\big/\deg(D) \simeq \QQ\big /\Nm_{F/\Q}(F^+)^{2g/de},\]
where $F^+$ is the set of non-zero elements of $F$ which are positive at all the ramified real places of $D$.    
\end{proposition}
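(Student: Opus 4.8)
The plan is to compute the image $\deg(D^\times) \subseteq \Q_+$ explicitly, and then to compare the monoid quotient $\Z_+/\deg(D)$ with the group quotient $\Q_+/\deg(D^\times)$ by an elementary argument.

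First I would identify the image of the degree map. The formula displayed in the proof of Lemma~\ref{dieu} gives, for $\gamma \in D^\times$,
\[\deg(\gamma) = \big(\Nm_{F/\Q}(\Nrd(\gamma))\big)^{2g/de},\]
where $\Nrd \colon D^\times \to F^\times$ is the reduced norm and the product $de$ is the same under either labelling of $d$ and $e$. The essential input is the Hasse--Schilling--Maass norm theorem for the central simple $F$-algebra $D$, which says that the image of the reduced norm is exactly the set of elements of $F^\times$ that are positive at every real place of $F$ ramified in $D$; this is precisely the group $F^+$ of the statement, so $\Nrd(D^\times) = F^+$. (When $D = F$ there are no ramified real places and the assertion reduces to the surjectivity of the identity map.) Substituting, the image of $\deg$ is the subgroup
\[\deg(D^\times) = \Nm_{F/\Q}(F^+)^{2g/de} \subseteq \Q_+;\]
positivity of these elements needs no separate check, since $\deg$ takes values in $\Q_+$ by construction.

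It then remains to show that the inclusion $\Z_+ \hookrightarrow \Q_+$ induces an isomorphism
\[\Z_+\big/\big(\deg(D^\times)\cap\Z_+\big) \;\xrightarrow{\ \sim\ }\; \Q_+\big/\deg(D^\times),\]
where $\deg(D^\times)\cap\Z_+ = \deg(D)$. The one arithmetic fact I would use is that every prime $\ell$ has a power in $\deg(D)$: multiplication by $\ell$ on $A$ is an endomorphism of degree $\ell^{2g}$, so $\ell^{2g} \in \deg(D)$, and hence $b^{2g} \in \deg(D)$ for every $b \in \Z_+$. This immediately gives surjectivity, since any $q = a/b \in \Q_+$ is congruent modulo $\deg(D^\times)$ to the integer $a\,b^{2g-1}$; it also shows that each class $[n] \in \Z_+/\deg(D)$ is invertible, recovering the assertion made before Lemma~\ref{dieu} that $\Z_+/\deg(D)$ is a group. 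For injectivity I would show that the subgroup $\deg(D^\times)$ is generated by $\deg(D)$: given $h \in \deg(D^\times)$, choose a product $s$ of prime powers, each lying in $\deg(D)$, large enough to clear the denominator of $h$; then $s \in \deg(D)$ and $t \ce sh$ is a positive integer, while $t = sh \in \deg(D^\times)$, so in fact $t \in \deg(D^\times)\cap\Z_+ = \deg(D)$. Thus $h = t/s$ with $s,t \in \deg(D)$, and two integers congruent modulo $\deg(D^\times)$ are already congruent modulo the submonoid $\deg(D)$.

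The main obstacle is conceptual: recognizing that the set $F^+$ in the statement is exactly the image of the reduced norm, so that the content of the proposition is the Hasse--Schilling--Maass theorem together with the degree formula of Lemma~\ref{dieu}. The subsequent comparison of the two quotients is elementary; the only point needing care is the clearing-of-denominators step, where one must take the auxiliary factor $s$ to be an \emph{integral} degree, i.e.\ an element of $\deg(D)$ rather than merely of $\deg(D^\times)$ --- this is exactly what the existence of prime-power degrees $\ell^{2g}$ guarantees.
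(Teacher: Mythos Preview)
Your proposal is correct and follows the same approach as the paper: identify $\deg(D^\times)$ via the degree formula from Lemma~\ref{dieu} together with the Hasse--Schilling--Maass theorem. The paper's proof is in fact just those two sentences and stops there, leaving the passage from the monoid quotient $\Z_+/\deg(D)$ to the group quotient $\Q_+/\Nm_{F/\Q}(F^+)^{2g/de}$ entirely implicit; your second paragraph supplies exactly the elementary argument (using $\ell^{2g}\in\deg(D)$) that makes this step honest.
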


\begin{proof}
We have seen already that $\deg : D^\times \to \Q_{+}$ is given by the map $(\Nm_{F/\Q} \circ \Nrd)^{2g/de}$.  But the Hasse-Schilling-Maass theorem \cite[Thm\ 33.15]{reiner} states that the reduced norm on $D$ surjects onto $F^+$.    
\end{proof}

Theorem \ref{intro} now follows from Theorem \ref{main} and Proposition \ref{concrete}

\section{The case $\dim A = 1$}

In this section we determine $G(E)$, for any elliptic curve $E$ over any algebraically closed field $k$, of any characteristic.  The characteristic 0 cases can be read off from Theorem \ref{intro}:
\begin{theorem}\label{char0}
Suppose $k = \bar k$ has characteristic $0$ and $E/k$ is an elliptic curve with endomorphism algebra $D$.  Then the degree map induces an isomorphism
\[G(E) \simeq 
\begin{cases}
\QQ/\qq & \mbox{ if $D = \Q$},\\
\QQ/\Nm_{K/\Q}(K^\times) & \mbox{ if $D = K$ is imaginary quadratic.}
\end{cases}  
\]
\end{theorem}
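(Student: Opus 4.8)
The plan is to obtain Theorem~\ref{char0} as a direct specialization of Theorem~\ref{intro} — equivalently, of Theorem~\ref{main} combined with Proposition~\ref{concrete} — to the case $g = \dim E = 1$. The only input special to elliptic curves is the classification of their endomorphism algebras: when $\char k = 0$, the algebra $D = \End(E) \otimes \Q$ is either $\Q$ or an imaginary quadratic field $K$. I would recall this dichotomy first, and then compute the numerical invariants $e = [F:\Q]$ and $d$ (where $d^2 = [D:F]$) that govern the target group $\QQ/\Nm_{F/\Q}(F^+)^{2g/de}$ in each case. In both cases $D$ is commutative, so $F = D$ and $d = 1$; moreover $D$ is a field, hence unramified as a division algebra, so there are no ramified real places and $F^+$ is simply $F^\times$.

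In the case $D = \Q$, the center is $F = \Q$, so $e = 1$, $d = 1$, and the exponent is $2g/de = 2$. Here $\Nm_{F/\Q}$ is the identity and $F^+ = \Q^\times$, so the relevant subgroup of $\QQ$ is $(\Q^\times)^2 = \qq$ (every square of a nonzero rational is a positive rational, and conversely). Proposition~\ref{concrete} then gives $G(E) \simeq \QQ/\qq$, matching the claim.

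In the case $D = K$ imaginary quadratic, the center is $F = K$, so $e = [K:\Q] = 2$, $d = 1$, and the exponent collapses to $2g/de = 1$. Since $K$ has no real embeddings, $K^+ = K^\times$; and since $K$ is imaginary quadratic the norm form is positive definite, so $\Nm_{K/\Q}(K^\times) \subseteq \QQ$ automatically. Proposition~\ref{concrete} thus yields $G(E) \simeq \QQ/\Nm_{K/\Q}(K^\times)$, again as claimed.

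The argument is essentially bookkeeping once Theorem~\ref{intro} is in hand, so I do not anticipate a genuine obstacle. The one point requiring care is the treatment of the positivity condition defining $F^+$: I would want to verify explicitly that for a \emph{commutative} $D$ that happens to be a field (rather than a non-split quaternion algebra, which does not occur here) there are no ramified real places, so that $F^+$ is the full group $F^\times$ and not a proper subgroup. This is precisely what makes the exponent-$2$ squaring in the $D = \Q$ case produce exactly $\qq$, and the imaginary quadratic case produce the full norm group $\Nm_{K/\Q}(K^\times)$.
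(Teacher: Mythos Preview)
Your proposal is correct and matches the paper's approach exactly: the paper simply states that the characteristic~$0$ cases ``can be read off from Theorem~\ref{intro},'' and what you have written is precisely that reading-off, with the invariants $e$, $d$, and $2g/de$ made explicit. Your discussion of why $F^+ = F^\times$ in both cases is the only nontrivial verification needed, and you have handled it correctly.
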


In the CM case, we can make the group structure of $G(E)$ more explicit:

\begin{proposition}\label{classgroup}
If $K$ is imaginary quadratic over $\Q$, then 
\[\QQ/\Nm_{K/\Q}(K^\times) \simeq C/C^2\oplus \bigoplus_{\mbox{$\ell$ inert}} \Z/2\Z,\]
where $C  = \Pic(\O_K)$ is the class group of $K$.  
\end{proposition}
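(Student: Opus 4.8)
The plan is to analyze the subgroup $N := \Nm_{K/\Q}(K^\times)$ of $\QQ$ through the intermediate subgroup $M := \Nm(I)$, where $I$ is the group of nonzero fractional ideals of $\O_K$ and $\Nm$ is the ideal norm. Here $N \subseteq M \subseteq \QQ$, since a principal ideal $(\beta)$ has norm $\Nm_{K/\Q}(\beta)$, which is positive because $K$ is imaginary quadratic. The first point I would record is that everything in sight is an $\F_2$-vector space: for $q \in \QQ$ we have $q^2 = \Nm_{K/\Q}(q) \in N$, so $\QQ/N$ is killed by $2$. Consequently the short exact sequence
\[0 \to M/N \to \QQ/N \to \QQ/M \to 0\]
is a sequence of $\F_2$-vector spaces and hence splits, and it remains only to identify the two outer terms.

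The quotient $\QQ/M$ is the easy piece. Using the factorization $\QQ \cong \bigoplus_\ell \Z$ given by $\ell$-adic valuations, I would compute $M$ prime by prime: for $\ell$ split or ramified some prime above $\ell$ has norm $\ell$, so the $\ell$-component of $M$ is all of $\Z$, whereas for $\ell$ inert the only prime above $\ell$ is $\ell\O_K$, of norm $\ell^2$, so the $\ell$-component is $2\Z$. Hence $\QQ/M \cong \bigoplus_{\ell\ \mathrm{inert}} \Z/2\Z$.

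The main work is the identification $M/N \cong C/C^2$, which is essentially genus theory. I would define $\phi \colon C \to M/N$ by $[\mathfrak{a}] \mapsto \Nm(\mathfrak{a}) \bmod N$; it is well defined because the principal ideals have norm group exactly $N$, and surjective because $\Nm \colon I \to M$ is. The inclusion $C^2 \subseteq \ker\phi$ is immediate from $\Nm(\mathfrak{c}^2) = \Nm(\mathfrak{c})^2 \in N$. For the reverse inclusion, if $\Nm(\mathfrak{a}) = \Nm_{K/\Q}(\beta) \in N$, then replacing $\mathfrak{a}$ by $\mathfrak{a}(\beta)^{-1}$, which has the same class in $C$, reduces to the case $\mathfrak{a}\bar{\mathfrak{a}} = \O_K$, i.e. a norm-one ideal.

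The crux, and what I expect to be the main obstacle, is to show that every norm-one fractional ideal has class in $C^2$. The cleanest route is cohomological. Writing $G = \mathrm{Gal}(K/\Q) = \langle \sigma \rangle$, complex conjugation $\sigma$ acts on $I$ and on $C$, and on $C$ it is inversion (because $\mathfrak{a}\bar{\mathfrak{a}}$ is always principal). The norm-one ideals are precisely $\ker(1+\sigma \colon I \to I)$, while $(1-\sigma)I$ always maps onto $C^2$ in the class group, since $(1-\sigma)\mathfrak{c} = \mathfrak{c}\bar{\mathfrak{c}}^{-1}$ has class $[\mathfrak{c}]^2$. Now $I$ is a permutation $G$-module — a copy of $\Z[G]$ for each split prime and a trivial $\Z$ for each inert or ramified prime — so $\hat H^{-1}(G, I) = 0$, giving the equality $\ker(1+\sigma) = (1-\sigma)I$ inside $I$. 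Pushing this forward along the $G$-equivariant surjection $I \twoheadrightarrow C$ shows that the norm-one classes are exactly $C^2$, so $\ker\phi = C^2$ and $\phi$ induces an isomorphism $C/C^2 \xrightarrow{\sim} M/N$. Combined with the split sequence above, this yields the claimed decomposition $\QQ/\Nm_{K/\Q}(K^\times) \simeq C/C^2 \oplus \bigoplus_{\ell\ \mathrm{inert}} \Z/2\Z$.
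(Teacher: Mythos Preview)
Your argument is correct, but it is organized quite differently from the paper's. The paper writes down a single explicit homomorphism in the opposite direction,
\[
\QQ/\Nm_{K/\Q}(K^\times)\;\longrightarrow\; C/C^2 \oplus \bigoplus_{\ell\ \mathrm{inert}} \Z/2\Z,
\]
sending each inert prime to the generator of its $\Z/2\Z$ factor and each non-inert prime $\ell$ to $[\mathfrak l]\in C/C^2$; well-definedness and bijectivity are then checked by short elementary ideal manipulations (e.g.\ if $\ell=\Nm(\alpha)$ then $(\alpha)=\mathfrak l\,\mathfrak a\bar{\mathfrak a}^{-1}$, so $[\mathfrak l]\in C^2$, and conversely). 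By contrast you filter through the ideal-norm group $M=\Nm(I)$, split the resulting short exact sequence using that $\QQ/N$ is $2$-torsion, identify $\QQ/M$ combinatorially, and then prove $M/N\cong C/C^2$ via the vanishing $\hat H^{-1}(G,I)=0$ for the permutation module $I$. The paper's route is shorter and entirely elementary, with no cohomology invoked; your route is more structural, makes the link to genus theory explicit, and isolates exactly where the key input (the cohomological triviality of the ideal group) enters. A minor trade-off is that your splitting of the short exact sequence of $\F_2$-vector spaces is noncanonical, whereas the paper's map is written down explicitly.
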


\begin{proof}
If $\ell = \Nm(\alpha)$ for some $\alpha \in K^\times$, then $\ell$ is not inert in $K$ and $(\alpha) = \l \a \bar\a^{-1}$ for some ideal $\a$ of $\O_K$ and some prime $\mathfrak{l}$ above $\ell$.  It follows that $[\l]$ is a square in $C$.  We therefore get a well defined map
\[\QQ/\Nm_{K/\Q}(K^\times) \longrightarrow C/C^2\oplus \bigoplus_{\mbox{$\ell$ inert}} \Z/2\] by sending an inert prime $\ell$ to the generator of $\Z/2\Z$ in the $\ell$th slot, and sending a non-inert prime $\ell$ to the class of $[\l]$ in  $C/C^2$, where $\l$ is any prime above $\ell$.  This map is clearly surjective, and to prove injectivity, we need to show that if $[\l]$ is a square in $C$, then $\ell$ is a norm.  If $[\l]$ is a square, then $\l = (\alpha)\a^2 = (\beta)\a\bar\a^{-1}$ for some ideal $\a$, so we see that $\Nm(\beta) = \ell$. 
\end{proof}

Now suppose $k$ has characteristic $p > 0$.  There are three cases to consider, depending on the dimension of $D = \End(E) \otimes \Q$ over $\Q$.    

\begin{theorem}
If $E$ is supersingular, then $G(E) = 0$.   
\end{theorem}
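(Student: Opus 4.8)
The plan is to show that $G(E) = 0$ for a supersingular elliptic curve $E$ by proving that the degree map $\deg : G(E) \to \ZZ\big/\deg(D)$ of Proposition \ref{additive} is injective and that its target is trivial. Since this map is always surjective, establishing $\ZZ\big/\deg(D) = 0$ together with injectivity would finish the proof. In the supersingular case $D = \End(E) \otimes \Q$ is a quaternion algebra over $\Q$ ramified exactly at $p$ and $\infty$, so the structural invariants are $F = \Q$, $e = 1$, $d = 2$, and $2g/de = 1$. The computation of $\deg(D^\times)$ then reduces, via the formula $\deg(g) = (\Nm_{F/\Q} \circ \Nrd(g))^{2g/de}$ from Lemma \ref{dieu} combined with the Hasse–Schilling–Maass description in Proposition \ref{concrete}, to identifying the image of the reduced norm $\Nrd : D^\times \to \Q^\times$.

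First I would invoke Proposition \ref{concrete} directly: it gives $\ZZ\big/\deg(D) \simeq \QQ\big/\Nm_{F/\Q}(F^+)^{2g/de}$, which here becomes $\QQ\big/(F^+)^1 = \QQ\big/F^+$ with $F = \Q$. The set $F^+$ consists of the nonzero elements of $\Q$ that are positive at every ramified real place of $D$. Since $D$ is ramified at the archimedean place $\infty$, the condition is precisely positivity, so $F^+ = \Q_+$ and the quotient $\QQ\big/\Q_+$ is trivial. Hence $\ZZ\big/\deg(D) = 0$, meaning every positive integer arising as the degree of an isogeny in the isotypic class is already a reduced-norm value, i.e. lies in $\deg(D)$.

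The crux is that injectivity of $\deg$ in characteristic $p$ is \emph{not} immediate, because the characteristic-0 argument of Theorem \ref{main} relied throughout on degrees being invertible in $k$ (to apply Theorem \ref{samedeg} and Lemma \ref{enddeg}), and in the supersingular case the relevant isogenies have degree divisible by $p$. The main obstacle, then, is to show $G(E)$ is killed even by $p$-power isogenies. I would argue that every class in $G(E)$ has the form $[E'] - [E'']$ for simple $E', E'' \in \tilde E$, exactly as in Theorem \ref{main}; the target being trivial means there is an isogeny $f : E' \to E''$ whose degree lies in $\deg(D)$, equivalently $\deg(f) = \deg(\alpha)$ for some $\alpha \in D^\times$. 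To conclude $[E'] = [E'']$ one wants an analogue of Lemma \ref{enddeg} that tolerates inseparable degrees. Here the supersingular geometry helps decisively: because $\End(E)$ is a maximal order in the definite quaternion algebra $D$ and acts transitively (up to the relevant scaling) on the lattices in its isogeny class, any two curves $E', E''$ in the isotypic class are related by an element of $D^\times$ realizing the required degree as an actual endomorphism composite, so the relation $[E'] = [E'']$ can be witnessed directly.

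Concretely, I expect the cleanest route is to reduce all $p$-power isogenies to Frobenius. Over $k = \bar k$ of characteristic $p$, the relative Frobenius $F : E \to E^{(p)} \cong E$ is an endomorphism of degree $p$, and $\deg(F) = \Nrd(F) = p$ confirms $p \in \deg(D)$ (consistent with the first step). Any $p$-isogeny out of $E$ differs from $F$ by an automorphism on the source or target, and by Theorem \ref{samedeg} applied to the prime-to-$p$ part together with direct manipulation of Frobenius factorizations for the $p$-part, one shows any two isogenous curves in $\tilde E$ represent the same $K_0(E)$-class. Thus $\deg$ is injective, its target vanishes, and therefore $G(E) = 0$, giving $K_0(E) \simeq \Z$ as claimed.
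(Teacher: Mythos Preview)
Your computation that $\ZZ/\deg(D)=0$ is fine, but the rest of the argument has a real gap. The injectivity proof of Theorem~\ref{main} uses in an essential way that all degrees are invertible in $k$: both the reduction of an arbitrary $\beta\in G(E)$ to a single difference $[E']-[E'']$ (via the relation~(\ref{decomp})) and the final step $[E']=[E'']$ (via Lemma~\ref{enddeg}) require Theorem~\ref{samedeg}, which is only proved for degrees prime to $p$. You acknowledge this, but your proposed fix does not work. The phrase ``direct manipulation of Frobenius factorizations'' is not an argument: the only $p$-isogeny out of a supersingular $E$ is Frobenius $F:E\to E^{(p)}$, and knowing $p\in\deg(D)$ gives no mechanism for concluding $[E]=[E^{(p)}]$ in $K_0(E)$, since Lemma~\ref{key} cannot be applied (there is a \emph{unique} order-$p$ subgroup scheme $\alpha_p\subset E$, so no two disjoint kernels exist). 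Moreover, Frobenius only relates $E$ to its Frobenius twists, not to an arbitrary supersingular $E'$, so even granting $[E]=[E^{(p)}]$ you would not be done. The appeal to transitivity of $D^\times$ on lattices is likewise not a $K_0$-argument: an isogeny of the right degree does not by itself produce equality of classes.

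The paper sidesteps all of this by avoiding the degree map entirely. One simply needs $[E]=[E']$ for every supersingular $E'$, and for this it quotes a result of Kohel: for any two supersingular curves $E,E'$ there exists a prime $\ell\neq p$ admitting both an $\ell$-isogeny $E\to E$ and an $\ell$-isogeny $E\to E'$. Since $\ell$ is invertible in $k$, Theorem~\ref{samedeg} applies directly and gives $[E]=[E']$. The point is that the supersingular isogeny graph is rich enough at primes $\ell\neq p$ that one never has to confront $p$-power isogenies at all; this is the missing idea in your sketch.
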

\begin{proof}
The isogeny class of $E$ is the set of supersingular elliptic curves.  It is therefore enough to show that $[E']  =[E]$ in $K_0(E)$ for any other supersingular elliptic curve $E'$.  By \cite[Cor.\ 77]{kohel}, we may choose a prime number $\ell \neq p$ such that there exist $\ell$-isogenies $E \to E$ and $E \to E'$.  Then $[E'] = [E]$ in $K_0(E)$ by Theorem \ref{samedeg}. 	    
\end{proof}

\begin{theorem}\label{ordinary}
If $D$ is isomorphic to an imaginary quadratic field $K$, then the degree map induces an isomorphism $G(E) \simeq \QQ/\Nm_{K/\Q}(K^\times)$.  
\end{theorem}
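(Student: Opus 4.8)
The plan is to prove that the surjection $\deg \colon G(E) \to \ZZ\big/\deg(D)$ is injective, following the template of Theorem \ref{main} but carried out entirely with \emph{prime-to-$p$} isogenies, so that Theorem \ref{samedeg} and Lemma \ref{enddeg} (which require the degree to be invertible in $k$) remain available at every step. Since $E$ is ordinary we have $D = K$, and for $\alpha \in K$ one has $\deg(\alpha) = \Nm_{K/\Q}(\alpha)$; thus $\deg(D) = \Nm_{K/\Q}(K^\times) \cap \Z_+$ and, exactly as in Proposition \ref{concrete} (with $2g/de = 1$), $\ZZ\big/\deg(D) \simeq \QQ/\Nm_{K/\Q}(K^\times)$. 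So the content is injectivity.

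The crucial geometric input, and the step I expect to be the main obstacle, is that the whole isogeny class is connected by prime-to-$p$ isogenies: for every elliptic curve $E'$ isogenous to $E$ there is an isogeny $E \to E'$ of degree prime to $p$. I would establish this through the theory of complex multiplication over $\bar\F_p$. An ordinary elliptic curve has endomorphism ring an order $\O'$ in $K$ of conductor prime to $p$; the curves with CM by a fixed such $\O'$ form a torsor under $\Pic(\O')$ with isogenies given by the ideal action; and every ideal class contains a prime $\l$ of norm $\ell \neq p$ by Chebotarev. Hence any two curves with the same CM order are linked by a prime-to-$p$ isogeny, and curves with distinct (prime-to-$p$ conductor) orders are linked by the standard prime-to-$p$ isogenies relating the orders. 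Concretely, each Frobenius step $E_1 \to E_1^{(p)}$, which moves $E_1$ by the ideal class $[\p]$ of a prime above $p$, can be replaced by the prime-to-$p$ isogeny attached to any $\l$ with $[\l] = [\p]$ in $\Pic(\O')$.

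Granting connectivity, I would define, for each positive integer $n$ prime to $p$, the class $[E_n] \in K_0(E)$ to be the class of the target of any prime-to-$p$ isogeny $E \to E'$ of degree $n$; this is well defined by Theorem \ref{samedeg}. Choosing \'etale subgroup schemes $C_n, C_m \subset E$ of orders $n$ and $m$ with $C_n \cap C_m = 0$ (possible over $k$ since $n,m$ are prime to $p$), Lemma \ref{key} yields the relation $[E_{nm}] = [E_n] + [E_m] - [E]$ just as in (\ref{decomp}). Since every simple object of $\CC_E$ is some $E_n$ with $n$ prime to $p$ by the connectivity above, and $K_0(E)$ is generated by the classes of simple objects, any $\beta \in G(E)$ can be written as $\beta = [E_N] - [E_M]$ with $N, M$ prime to $p$.

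Finally, suppose $\deg(\beta) = 1$ in $\ZZ\big/\deg(D)$. Then $N/M \in \deg(D)$, and since $M^2 = \Nm_{K/\Q}(M) \in \deg(D)$ we obtain $NM = (N/M)M^2 \in \deg(D)$. By connectivity there is a prime-to-$p$ isogeny $f \colon E_N \to E_M$, for instance the composite of a degree-$N$ isogeny $E_N \to E$ with a degree-$M$ isogeny $E \to E_M$, and its degree $NM$ is both prime to $p$ and a norm from $K$. Lemma \ref{enddeg} then gives $[E_N] = [E_M]$, so $\beta = 0$ and $\deg$ is injective. The only delicate point is the prime-to-$p$ connectivity of the isogeny class; everything else is a faithful transcription of the characteristic-zero argument with all degrees kept invertible.
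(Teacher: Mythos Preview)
Your argument is correct and follows the same overall template as the paper: establish that every $E'$ in the isogeny class admits a prime-to-$p$ isogeny from $E$, and then rerun the proof of Theorem~\ref{main} verbatim using only degrees invertible in $k$. The paper's proof of Theorem~\ref{ordinary} is literally ``proceed exactly as in the proof of Theorem~\ref{main}'' after securing this connectivity.

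The one substantive difference is how you obtain the prime-to-$p$ connectivity. The paper cites Deuring to get that $\End(E')$ has conductor prime to $p$ and that $p$ splits in $K$, and then invokes Kani's computation that the binary quadratic form $\deg\colon \Hom(E,E')\to\Z$ has discriminant prime to $p$; it follows immediately that this form represents integers prime to $p$. You instead argue via the CM description: the curves with a fixed order $\O'$ form a $\Pic(\O')$-torsor, Chebotarev supplies a prime $\l$ of norm $\ell\neq p$ in each ideal class, and orders of distinct prime-to-$p$ conductors are linked by $\ell$-isogenies with $\ell\mid f$. Both arguments rest on the same Deuring input (prime-to-$p$ conductor, $p$ split); the paper's route is a one-line citation, while yours is more self-contained but imports Chebotarev for ring class fields. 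Your aside about replacing the Frobenius step $E_1\to E_1^{(p)}$ by a prime-to-$p$ isogeny in the same ideal class is a nice concrete picture but is not needed once you have the torsor statement.
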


\begin{proof}
We may assume that $\End(E)$ is isomorphic to the ring of integers $\O_K$.  Let $E'$ be an elliptic curve isogenous to $E$.  By a result of Deuring \cite[p.\ 263]{deuring}, the ring $\End(E')$ has index prime to $p$ in $\O_K$ and $p$ is split in $\O_K$.  Thus, by \cite[Prop.\ 40]{kani}, the rank two quadratic form $\deg \colon \Hom(E, E') \to \Z$, has discriminant prime to $p$.  It follows that there is an isogeny $E \to E'$ of degree prime to $p$.  Now we proceed exactly as in the proof of Theorem \ref{main}, but using the fact that $[E'] = [E_n]$ in $K_0(E)$, for some $n \in \Z$ prime to $p$.       
\end{proof}

\begin{theorem}\label{trans}
If $D = \Q$, then $G(E)$ is isomorphic to a subgroup of index $2$ in $\Z \bigoplus \QQ\big/\qq$.  In particular, $G(E)$ is not a torsion group.    
\end{theorem}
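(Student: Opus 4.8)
The curve $E$, and every curve isogenous to it, is ordinary: since $\End(E)=\Z$ the endomorphism algebra is not the quaternion algebra forced by supersingularity. Over the algebraically closed (hence perfect) field $k$ this has two consequences I will lean on. First, the $p$-divisible group of an ordinary elliptic curve splits canonically as $\mu_{p^\infty}\times\Q_p/\Z_p$. Second, every finite commutative group scheme splits canonically into its connected and reduced (\'etale) parts, and both are exact functors. The plan is to pair the degree homomorphism $\deg\colon G(E)\to\ZZ\big/\deg(D)=\QQ/\qq$ of Section 4 with a new $\Z$-valued invariant recording the net ``Frobenius direction'' of the $p$-part, and to show the two together identify $G(E)$ with an explicit index-$2$ subgroup of $\Z\oplus\QQ/\qq$.

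First I would build the invariant. For an isogeny $\phi$ between ordinary objects of $\CC_E$, write $\ker\phi=(\ker\phi)_{p'}\times(\ker\phi)_p$ and split the $p$-part into its connected and \'etale pieces, of orders $p^{a_c}$ and $p^{a_e}$; set $\delta(\phi)=a_c-a_e$. Exactness of the connected and \'etale functors makes $\delta$ additive in $\phi$, and $\delta$ vanishes on every prime-to-$p$ isogeny and on $[m]$ for every integer $m$ (whose $p$-part $E^n[p^{v_p(m)}]$ is balanced). Fixing $\phi\colon E^n\to B$ and putting $v(B)=\delta(\phi)$ then gives a quantity independent of $\phi$: two choices differ by an element of $\GL_n(D)=\GL_n(\Q)$, which is assembled from multiplications $[m]$ and prime-to-$p$ isogenies, on all of which $\delta$ vanishes. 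The same exactness shows $v$ is additive on short exact sequences, so it descends to $v\colon K_0(E)\to\Z$, which I restrict to $G(E)$. The crucial compatibility is $v_p(\deg\phi)=a_c+a_e\equiv a_c-a_e=\delta(\phi)\pmod 2$, so for every $\beta\in G(E)$ the $p$-component of $\deg(\beta)\in\QQ/\qq$ equals $v(\beta)\bmod 2$.

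This compatibility forces $\Phi=(v,\deg)\colon G(E)\to\Z\oplus\QQ/\qq$ to land in $H=\{(n,x):n\equiv x_p\bmod 2\}$, a subgroup of index $2$, and I would show $\Phi$ is an isomorphism onto $H$. For surjectivity: as $k$ is algebraically closed, for each prime $\ell\neq p$ there is an $E_\ell$ with $\dist_E(E_\ell)=\ell$ reached by a prime-to-$p$ isogeny, so $\Phi([E_\ell]-[E])=(0,e_\ell)$; and the Frobenius isogeny $E\to E^{(p)}$ has connected kernel $\mu_p$, giving $\Phi([E^{(p)}]-[E])=(1,e_p)$. Since $2(1,e_p)=(2,0)$, these classes generate $H$. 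For injectivity I would mimic the proof of Theorem \ref{main}: using Lemma \ref{key} reduce any $\beta\in\ker\Phi$ to a single difference $[E']-[E'']$ of simple curves, treating the prime-to-$p$ and $p$-parts separately (the $p$-part being where a little more care than in characteristic $0$ is needed, because $E[p]$ has only the two subgroups $\mu_p$ and $\Z/p\Z$). Then $\deg(\beta)=0$ produces an isogeny $f\colon E'\to E''$ of square degree, and $v(\beta)=0$ forces $\delta(f)=0$, i.e.\ $a_c=a_e$; factoring off the prime-to-$p$ part (of square degree, handled by Theorem \ref{samedeg}) leaves a balanced $p$-power isogeny.

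The heart of the matter, and the step I expect to be the main obstacle, is the resulting local claim: a $p$-power isogeny with $a_c=a_e=a$ must be $[p^a]$. Here the canonical splitting $\mu_{p^\infty}\times\Q_p/\Z_p$ is decisive: the connected part of $\ker f$ is a connected subgroup of $\mu_{p^\infty}$, hence $\mu_{p^a}$, and its reduced part is a subgroup of $\Q_p/\Z_p$, hence $\Z/p^a\Z$, so $\ker f=\mu_{p^a}\times\Z/p^a\Z=E'[p^a]$ and $E''\cong E'$. Thus $[E']=[E'']$ and $\beta=0$, giving $G(E)\cong H$, a subgroup of index $2$ in $\Z\oplus\QQ/\qq$. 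Finally $v$ sends $[E^{(p)}]-[E]$ to $1$, so $G(E)$ maps onto $\Z$ and therefore contains elements of infinite order; in particular it is not a torsion group.
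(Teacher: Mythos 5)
Your proposal is correct and takes essentially the same route as the paper: your invariant $\delta=a_c-a_e$ is, up to sign, the paper's $\deg_p(f)=e(f)-c(f)$ (the Jordan--H\"older count of $\Z/p\Z$ versus $\mu_p$ factors), your well-definedness argument via $\GL_n(\Q)$ and Smith normal form matches the paper's lemma that $\deg_p$ vanishes on isogenies $E^n\to E^n$, and your surjectivity generators (Frobenius for $(1,e_p)$, prime-to-$p$ isogenies for $(0,e_\ell)$) and injectivity reduction to a single difference killed by Theorem \ref{samedeg} plus the fact that a balanced $p$-power kernel must be $E[p^a]$ all mirror the paper's proof. The only differences are cosmetic: you realize the image directly as the congruence subgroup $\{(n,x): n\equiv x_p \bmod 2\}$ of $\Z\oplus\QQ\big/\qq$, where the paper maps onto $\Z\oplus H_p$ and then embeds it with index $2$, and your use of the canonical splitting $\mu_{p^\infty}\times \Q_p/\Z_p$ repackages the paper's appeal to the uniqueness of the \'etale quotient $E_a$ in showing $[E_{a,n}]$ is well defined.
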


\begin{proof}
We define an additive map $\deg_{p,E}  \colon \Ob(\CC_E) \to \Z$ as follows.  For any isogeny $f \colon B \to B'$ in $\CC_E$, let $e(f)$ denote the number of Jordan-Holder factors of $\ker f$ isomorphic to $\Z/p\Z$, let $c(f)$ denote the number of factors isomorphic to $\mu_p$, and let $\deg_p(f) = e(f) - c(f)$.  For $B \in \Ob(\CC_E)$,  we define $\deg_{p,E}(B) = \deg_p(f)$, where $f$ is any isogeny $f \colon B \to E^n$.  To check that this is well-defined we use:  

\begin{lemma}
If $f \colon E^n \to E^n$ is an isogeny, then $\deg_p(f) = 0$.   
\end{lemma}    

\begin{proof}
The case $n = 1$ is clear since then $f \in \Z$ and $E[p] \simeq \Z/p\Z \oplus \mu_p$.  For $n > 1$, we may think of $f$ as a matrix $M \in \Mat_n(\Z)$.  We may assume $M$ is in Smith normal form, at the cost of choosing a new product decomposition for $E^n$.  Then $\ker f$ is the direct sum of kernels of endomorphisms of $E$ and the lemma follows from the case $n = 1$.   
\end{proof}
Now suppose $f' \colon B \to E^n$ is another isogeny.  If we let $g' \colon E^n \to B$ be such that $f'g' = [\deg f']_{E^n}$, then $\deg_p(f') + \deg_p(g') = 0$ and 
\[\deg_p(f) - \deg_p(f') = \deg_p(f) + \deg_p(g') = \deg_p(fg') = 0,\]
by the lemma.  So $\deg_{p,E}(B)$ is well-defined.  We also conclude that $\deg_p(f) = 0$ for any $f \in \End(B)$, just as in Corollary \ref{ends}.  

One now checks that $\deg_{p,E}$ is additive, exactly as in Proposition \ref{additive}.  We therefore get an induced map $K_0(E) \to \Z$ which depends on $E$.  But the restriction to $G(E)$ gives a canonical map $\deg_p \colon G(E) \to \Z$  sending $[E'] - [E'']$ to $\deg_p (f)$, where $f \colon E' \to E''$ is any isogeny.  

For any isogeny $f$, we write $\ldeg(f)$ for the prime-to-$p$ part of $\deg(f)$. Now consider the composite map
\[\tot \colon G(E) \stackrel{(\deg_p, \deg)}{\longrightarrow} \Z \oplus \QQ\big /\Q_+^2 \longrightarrow \Z \oplus H_p\]
where $H_p \simeq \bigoplus_{\ell \neq p} \Z/2\Z$ is the prime-to-$p$ part of $\QQ\big /\Q_+^2$, and the map $\QQ\big /\Q_+^2 \to H_p$ is the canonical surjection.  Then 
\[\tot([E_1] - [E_2]) = (\deg_p(f), \deg_\ell(f))\] 
for any isogeny $f \colon E_1 \to E_2$.  It follows that $\tot$ is surjective, and we will show that it is injective as well.    

For every $(a,n) \in \Z \oplus \Z_+$ with $n$ prime to $p$, we let $E_{a,n}$ be any elliptic curve admitting an isogeny $f \colon E \to E_{a,n}$ such that $\deg_p(f) = a$ and $\ldeg(f) = n$.  Then the class of $[E_{a,n}]$ in $K_0(E)$ is independent of the choice of $E_{a,n}$.  Indeed, the isogeny $f \colon E \to E_{a,n}$ can be factored as 
\[E  \longrightarrow E_a \stackrel{g}{\longrightarrow} E_{a,n}, \]
where $E_a$ is the unique \'etale quotient of $E$ of degree $p^a$, and $g$ is an $n$-isogeny.  Thus the class of $[E_{a,n}]$ is uniquely determined by Theorem \ref{samedeg}.  

Using Lemma \ref{key}, we obtain the following relations in $K_0(E)$, for any integers $a$ and $b$, and any positive integers $n$ and $m$ coprime to $p$:
\begin{align*}
[E_{a,n}] &= [E_{a,1}] + [E_{0,n}] - [E]\\
[E_{a+b,1}] &= [E_{a,1}] + [E_{b,1}] - [E]\\
[E_{0,nm}] &= [E_{0,n}] + [E_{0,m}] - [E].
\end{align*}
It follows that any $\beta \in G(E)$ can be written as 
\[\beta = [E_{a,1}] - [E_{b,1}] + [E_{0,n}] - [E_{0,m}]\] for integers $a$ and $b$, and positive integers $n$ and $m$.  If $\tot(\beta) = 0$, then we must have $a = b$ and $n = m d^2$ for some rational number $d$.  By Lemma \ref{enddeg}, we must have $[E_{0,n}] = [E_{0,m}]$, and so $\beta = 0$.  Thus, $\tot$ is an injection, and hence an isomorphism.  Since $\Z \oplus H_p$ embeds in $\Z \oplus \QQ\big /\Q_+^2$ as an index 2 subgroup, Theorem \ref{trans} is proved.  
\end{proof}

\begin{remark}
An example of a non-torsion class in $G(E)$ is $[E] - [E^{(p)}]$, where $E^{(p)}$ is the Frobenius-transform of $E$, i.e.\ the elliptic curve with $j$-invariant $j(E)^p$.  
\end{remark}

\subsection{Acknowledgments} 
The author thanks Julian Rosen for suggesting the proof of Theorem \ref{entirecat} and for other conversations on this topic.  He also thanks Taylor Dupuy and Yuri Zarhin for their helpful feedback.      


\end{document}